\def\presuper#1#2%
\tikzset{%
    symbol/.style={%
        ,draw=none
        ,every to/.append style={%
            edge node={node [sloped, allow upside down, auto=false]{$#1$}}}
    }
}
\newcommand{\Rr}{\mathbb{R}}
\newcommand{\Cc}{\mathbb{C}}
\newcommand{\Zz}{\mathbb{Z}}
\newcommand{\B}{\mathcal{B}}
\newcommand{\Ell}{{\mathrm{\mathcal{F}Ell}}}	
\newcommand{\EllV}{\presuper{\V}{\mathrm{\mathcal{F}Ell}}}	
\newcommand{\FEllV}{\EllV}
\newcommand{\F}{\mathcal{F}}
\newcommand{\Q}{\mathcal{Q}}
\newcommand{\cS}{\mathcal{S}}
\newcommand{\K}{\mathcal{K}}	
\renewcommand{\H}{\mathcal{H}}	
\renewcommand{\L}{\mathcal{L}}	
\newcommand{\C}{\mathcal{C}}	
\newcommand{\Cl}{\mathrm{Cl}}	
\newcommand{\E}{\mathcal{E}}	
\newcommand{\A}{\mathcal{A}} 	
\newcommand{\U}{\mathcal{U}}  	
\newcommand{\Tau}{\mathcal{T}}
\newcommand{\Spinc}{\mathrm{Spin^c}}
\newcommand{\Vphi}{\mathrm{\presuper{\varphi}{\V}}}
\newcommand{\Aphi}{\mathrm{\presuper{\varphi}{\A}}} 
\newcommand{\SigmaA}{\mathrm{\Sigma_{\A}}}
\newcommand{\rhophi}{\mathrm{\presuper{\varphi}{\varrho}}}
\newcommand{\KV}{\mathrm{\presuper{\V}{\mathrm{K}}}}
\newcommand{\EV}{\mathrm{\presuper{\V}{\mathrm{E}}}}
\newcommand{\EVgeo}{\EV^{\mathrm{geo}}}	
\newcommand{\KVgeo}{\KV^{\mathrm{geo}}}	
\newcommand{\rangephi}{\mathrm{\presuper{\varphi}{r}}}
\newcommand{\sourcephi}{\mathrm{\presuper{\varphi}{s}}}
\newcommand{\Sblup}{\mathrm{SBlup}}
\newcommand{\SBlup}{\Sblup}
\renewcommand{\H}{\mathcal{H}} 
\newcommand{\G}{\mathcal{G}}	
\newcommand{\Gad}{\G^{\mathrm{ad}}}	
\newcommand{\M}{\mathcal{M}}		
\renewcommand{\L}{\mathcal{L}} 
\newcommand{\V}{\mathcal{V}}		
\newcommand{\End}{\mathrm{End}}		
\newcommand{\Hom}{\mathrm{Hom}}		
\renewcommand{\P}{\mathcal{P}}		
\newcommand{\Diff}{\mathrm{Diff}}	
\newcommand{\GF}{\mathrm{\G_{\F}}}
\newcommand{\MF}[1][]{\mathrm{M}^{\F}_{#1}}
\newcommand{\pd}{\mathrm{pd}}
\newcommand{\im}{\operatorname{im}} 
\newcommand{\ind}{\operatorname{ind}}
\newcommand{\Id}{\operatorname{Id}}	
\newcommand{\pr}{\mathrm{pr}}
\newcommand{\odd}{\mathrm{odd}}
\newcommand{\nc}{\mathrm{nc}}
\newcommand{\ad}{\mathrm{ad}}
\newcommand{\ev}{\mathrm{ev}}
\newcommand{\iso}{\xrightarrow{\sim}}	
\newcommand{\Gphi}{\mathrm{\presuper{\varphi}{\G}}} 
\newtheorem{Thm}{Theorem}[section]
\newtheorem{Lem}[Thm]{Lemma}
\newtheorem{Prop}[Thm]{Proposition}
\theoremstyle{definition}
\newtheorem{Def}[Thm]{Definition}
\newtheorem{Ex}[Thm]{Example}
\newtheorem{Rem}[Thm]{Remark}
\begin{document}
\setcounter{page}{1}


\title[$K$-homology for Lie manifolds]{A geometric approach to  $K$-homology for Lie manifolds}

\author[Karsten Bohlen, Jean-Marie Lescure]{Karsten Bohlen, Jean-Marie Lescure}





\maketitle

\begin{abstract}
We show that the computation of the Fredholm index of a fully elliptic pseudodifferential operator on an integrated Lie manifold can be reduced to the computation of the index of a Dirac operator, perturbed by a smoothing operator, canonically associated, via the so-called clutching map. To this end we adapt to our framework ideas coming from Baum-Douglas geometric $K$-homology and in particular we introduce a notion of geometric cycles, that can be categorized as a variant of the famous geometric $K$-homology groups, for the specific situation here. We also define a comparison map between this geometric $K$-homology theory and a relative $K$-theory group, directly associated to a fully elliptic pseudodifferential operator.
 \end{abstract} 

\begin{quote}
\footnotesize{{\sc R\'esum\'e.} Nous d\'emontrons que le calcul de l'indice de Fredholm d'un op\'erateur pseudodiff\'erentiel pleinement elliptique sur une vari\'et\'e de Lie int\'egr\'ee peut \^etre ramen\'e \`a celui de l'indice d'un op\'erateur de Dirac perturb\'e par un op\'erateur r\'egularisant et canoniquement associ\'e via l'application de serrage (clutching). 
Pour cela nous adaptons \`a notre situation des id\'ees venant de la $K$-homologie g\'eom\'etrique de Baum-Douglas, en particulier nous introduisons une notion de cycles g\'eom\'etriques qui engendrent, dans le contexte sp\'ecifique de l'article, un analogue de la $K$-homologie g\'eom\'etrique bien connue. Nous d\'efinissons \'egalement une application de comparaison entre notre  $K$-homologie g\'eom\'etrique et un groupe de $K$-th\'eorie relative directement associ\'e aux op\'erateurs pseudodiff\'erentiels pleinement elliptiques.}
\end{quote}

\section{Introduction}

The Atiyah-Singer index theorem is a celebrated and fundamental result with numerous applications in topology, geometry and analysis. Atiyah and Singer proved the index theorem for elliptic pseudodifferential operators on compact manifolds using $K$-theory and pseudodifferential operator theory. Later on Atiyah, Bott and Patodi \cite{ABP} proved the index formula for Dirac type operators using heat kernel methods, and this approach revealed itself very useful for the case of closed smooth manifolds \cite{APS1, APS2, APS3, BGV, BS, MelroseBook}.  To recover the full Atiyah-Singer theorem from the special cases covered by heat kernel methods, one needs to reduce the index problem for arbitrary elliptic operators to the one for Dirac type operators. Such a reduction is a byproduct of the approach to index theory due to P. Baum and R. Douglas, cf. \cite{BD} and \cite{BD2}. They constructed a geometric $K$-homology and a suitable comparison homomorphism between the geometric and analytic $K$-homology groups, and the complete proof that this is in fact an isomorphism was published recently \cite{BHS}. 
Thanks to the Baum-Douglas approach to the index theorem, the computation of the Fredholm index of an elliptic pseudodifferential operator on a compact closed manifold can be reduced to the computation of the index of a suitable geometric Dirac operator, naturally associated to a geometric cycle.
Motivated by the extension of heat kernel methods to Lie manifolds \cite{BS}, the purpose of the present work is to address the question of reducing the index problem to the one of Dirac operators for the singular manifolds for which a suitable Lie groupoid allows to well pose the index problem. More precisely, we consider (integrated) Lie manifolds $(M,\G)$, that is  amenable Lie groupoids $\G$ over  compact manifolds with corners such that $M_{0}=M\setminus \partial M$ is saturated and $\G_{M_{0}}=M_{0}\times M_{0}$. For instance, that covers  the following situations: 
\begin{itemize}
	\item Manifolds with corners \cite{M}. Here $\G = \G_b$ is obtained after blowing-up successively the submanifolds $H_i\times H_i$ into $M\times M$, where $H_i $ run through the connected boundary hypersurfaces of $M$, and then removing the so-called lateral faces in $b$-geometry terminology, which equivalently amounts to consider the subspace $\mathrm{\Sblup}_{r,s}(M^2,(H_i^2)_{{}_{i}})$ of the blow-up according to the terminology of \cite{DS}.
	\item  Manifolds with fibered corners, and thus equivalently stratified pseudomanifolds \cite{DLR}. Here $\G=\G_\pi$ is obtained as before (blowing-up and removing the lateral faces), but now starting with $\G_b$ in which the fibred diagonals $H_i\times_{\pi} H_i$ are blown-up in the order prescribed by the order relation between boundary hypersurfaces. 
	\item Manifolds with amenable foliated boundary. The pseudodifferential operators are studied in \cite{R} and the corresponding groupoid $\G_\F$, although not directly used, is constructed. Actually, it is stated in \cite{DS} that $\G_\F$ is obtained by blowing-up in $M\times M$ the holonomy groupoid of the foliation on the boundary, that is $\G_\F = \mathrm{\Sblup}_{r,s}(M\times M, \mathrm{Hol}(\F))$. 
	\item There are many other examples related to singular spaces, see for instance \cite{N,N2015,CNQ2018}.
\end{itemize}
In such a case, there is a well defined notion of full ellipticity for operators in the corresponding calculus, that ensures the Fredholmness of the associated operators on $M$. The question can now be made more precise. Given a fully elliptic operator $P$ on $(M,\G)$, can we construct a Dirac operator $D$ in the same calculus, which is Fredholm and with the same index as $P$? Contrary to the case of $C^{\infty}$ compact manifolds without boundary, we are not able to give an affirmative answer to this question. Nevertheless, we are able to solve positively the question by allowing tamed Dirac operators, that is, Dirac operators perturbed by smoothing elements in the calculus. Along the way, we prove that if there is no obstruction at the level of $K$-theory for the full ellipticity (and thus Fredholmness) of Dirac operators, then a perturbation into a Fredholm operator using smoothing operators and elementary Dirac operators always exists. This echoes previous works by Bunke \cite{B2009} and Carrillo Rouse-Lescure \cite{CRL}. Explicitly, our main result is the following, cf. Theorem \ref{Thm:redDirac}: Given a pseudodifferential operator $P$ in the pseudodifferential calculus of the integrated Lie manifold $(M, \G)$, there is a Callias type operator $\C$ on the clutching Lie manifold $(\Sigma_{\A}, \Gphi)$, contained in the relevant calculus, such that the appropriate stable homotopy classes, and thereby the Fredholm indices, of $\C$ and $P$ are equal. The result facilitates a reduction of the index problem for the pseudodifferential operator $P$ to the index computation of the simpler geometric operator $\C$. We note that the literature consists of various index formulas in specialized cases of specific Lie manifolds and for geometric operators of this simpler type. More generally, in conjunction with the recent work of Bohlen-Schrohe \cite{BS}, the reduction of the index problem to first order geometric operators furnishes
a corresponding index formula for fully elliptic pseudodifferential operators on Lie manifolds. Along the way to our main result we prove numerous auxiliary results that are of independent interest.  Also, these considerations bring a notion of geometric cycles that mimics the original one of Baum-Douglas with the following main variations into the choice of ingredients:
\begin{enumerate}
\item We only accept submersions of manifolds with corners $\varphi :\Sigma \to M$ instead of general continuous maps from  $\Spinc$ manifolds to $M$;
\item We replace Dirac operators with tame Dirac operators.
\end{enumerate}
The point (1) is apparently rather restrictive but actually sufficient for our purpose, since the geometric cycles constructed by the clutching process are of this kind. 
Furthermore, the classical operations on geometric cycles: isomorphisms, direct sums, cobordisms and vector bundle modifications have a natural analog here. The resulting abelian group, that we call geometric $K$-homology of $(M,\G)$ can then be compared with the suitable relative $K$-theory group (also known as stable homotopy group of fully elliptic operators). Our approach is definitely tied to the clutching construction which, in a sense, dictated to us the most convenient notion of geometric cycles for our purpose, the latter being, as explained above, the possibility of representing the index of abstract pseudodifferential operators by the one of Dirac operators, perturbed by smoothing operators. We mention that other and more general approaches of geometric $K$-homology for groupoids exist, in particular in \cite{CRW,Connesbigbook,EM}. The observation that one needs to add a regularizing operator to the Dirac operator in order to be able to represent all $K$-homology classes has appeared before, e.g. in \cite{B2009, B1995, MelrosePiazza, Nistor}, in work on cylindrical ends. In that case, one can absorb the regularizing perturbation into the operator with a suitable change of weight. This approach then leads to obstructions to the existence of analogs of the Atiyah-Patodi-Singer boundary conditions on manifolds with higher dimensional corners, as in \cite{Nistor}. 
To finish this introduction, let us observe that most bivariant $K$-theory groups used in this article are of the form $KK_*(\Cc,A)$. Therefore, the reader unfamiliar with Kasparov bivariant $K$-theory should keep in mind that only usual $K$-theory is really needed here. However, it is almost always more convenient for us to have $K$-theoretic elements represented by Kasparov bimodules rather than by idempotents or unitaries, and hence, unless otherwise stated, $K_*(A)$ will be identified with $KK_* (\Cc,A)$.

\subsection*{Overview}
The article is organized as follows. 

\begin{itemize}
\item In Section \ref{Section:2}, we study fully elliptic operators contained in the pseudodifferential calculus on the tuple $(M, \G)$, where $\G$ is a Lie groupoid over $M$. We introduce the group of stable homotopy classes of fully elliptic operators $\FEllV(M)$ which is defined to equal the relative $K$-theory group $K(\mu)$, where $\mu$ is the homomorphism of the continuous functions $M$ into the full symbol algebra, given by the action as multiplication operators. We prove a Poincar\'e duality type result which states that the Fredholm index can be expressed in a precise way in terms of an index map defined in terms of purely geometric data, given by suitable deformation groupoids. More precisely, we recall the geometric model for the full symbol space of a pseudodifferential operator on a Lie groupoid in terms of the deformation groupoid $\Tau$, referred to as the \emph{noncommutative tangent bundle}. 
The $K$-theory group $K_0(C^{\ast}(\Tau))$ turns out to be the natural receptable for the stable homotopy class of the full symbol of a fully elliptic pseudodifferential operator. Indeed, we show that there is a very general weak form of Poincar\'e duality, taking the form of an isomorphism of groups $\pd : \FEllV(M) \overset{\cong}{\longrightarrow} K_0(C^{\ast}(\Tau))$ induced by a non-commutative principal symbol homomorphism $\sigma_{\mathrm{nc}}$.

\item In Section \ref{tameDirac}, we investigate so-called \emph{tamings} of geometric Dirac operators on Lie groupoids. The main result is a \emph{Diracification theorem} which states that any fully elliptic pseudodifferential operator $P$ on a Lie groupoid, whose principal symbol has the same class in $K$-theory as a given Dirac operator $D$, has a so-called taming $B = (D \oplus D') + R$, consisting of a Dirac operator $D'$, the principal symbol of which being $0$ in $K$-theory, and a smoothing operator $R$, such that the classes of the operators $B$ and $P$ agree in the relative $K$-theory group $K(\mu)$. 
We introduce a pushforward operation on the level of deformation groupoids and show that this operation commutes with the Fredholm index. 

\item In Section \ref{Kgeo}, we introduce geometric $K$-homology groups $\KVgeo(M)$ (of even and odd degree) on a given integrated Lie manifold $(M, \G)$. Using the pushforward operation, introduced in the previous section, we define the comparison map $\lambda \colon \KVgeo(M) \to K_0(C^{\ast}(\Tau))$. The main result of this section is a proof that the comparison map $\lambda$ is a well-defined group homomorphism. This result is correctly viewed as a generalization of the cobordism invariance of the analytic index in the standard setting.

\item In Section \ref{RedDirac}, we define the so-called \emph{clutching construction} in the setting of Lie manifolds. The resulting quotient map $c \colon K_0(\mu) \to \KVgeo(M)$ is 
induced by the map associating to a full symbol of a (fully elliptic) pseudodifferential operator a geometric cycle.
We show that $c$ is a well-defined homomorphism of groups. 
Then we finish the proof of the main result of the paper, which states that the computation of the Fredholm index of any fully elliptic pseudodifferential operator on a Lie manifold can be reduced to the computation of the index of a geometric Callias type operator, associated to the geometric data, generating the geometric $K$-homology group. 
\end{itemize}

The view of index theory on Lie manifolds considered in this work can be summarized in terms of the following commutative diagram:
\begin{center}
\begin{equation} \label{Main}
\begin{tikzcd}[row sep=huge, column sep=huge, text height=1.5ex, text depth=0.25ex]
& K_0(C^{\ast}(\Tau)) \arrow{rr}{\ind_{\F}} & &  \Zz  \\
\KVgeo(M) \arrow{ur}{\lambda} & & \arrow{ll}{c} K_0(\mu)=\FEllV(M)  \arrow[ul, "\pd", "\simeq"']\arrow[ur,  "\ind"']
\end{tikzcd}
\end{equation}
\end{center}

The clutching homomorphism $c$ is defined in Section \ref{RedDirac}, the comparison homomorphism is defined in Section \ref{Kgeo}, and the Poincar\'e duality isomorphism $\pd = \pd_{\partial M}$ in Section \ref{Section:2}. The geometrically defined index map $\ind_{\F}$ is constructed in Section \ref{Section:2}, as well as the index $\ind = \ind_{\partial M}$, associated to the relative $K$-theory, cf. Remark \ref{Rem:Fh-F}.

\section{Poincar\'e duality}

\subsection*{Full ellipticity, groupoids and $K$-theory}

\label{Section:2}
Let  $\G\rightrightarrows M$ be a Lie groupoid. 
For simplicity we always assume that $\G$ is amenable, so that the maximal and reduced $C^{*}$-algebraic completion  of $C_{c}(\G)$ is the same and is nuclear   \cite{Re1980,AR}. The Lie algebroid of $\G$ is denoted by $\A$. 

The manifold $M$ will always be compact and may have corners. In that case, we assume that the boundary hypersurfaces of $M$ and $\G$ are embedded  \cite{MelroseMWC}, and the source and range maps are \emph{submersions between manifolds with corners} \cite{LN2001}, also called tame submersions in \cite{N2015}. We recall that it means, for a  $C^{\infty}$ map $f : M \to N$  between manifolds with corners, that at any point $x\in M$ we have:
\begin{equation}\label{eq:tame-submersion}
 df_{x}(T_{x}M) = T_{f(x)}N \quad \text{and}\quad (df_{x})^{-1}(T_{f(x)}^{+}N) = T_{x}^{+}M. 
\end{equation}
Here $TM$ denotes the ordinary tangent vector bundle and $ T^{+}M$ its subset of inward pointing vectors. Under such assumptions, $f$  preserves the  codimension of points and its fibers have no boundary.

 If $E_j\to M$ are vector bundles, we denote by $\Psi^{*}_{\G}(E_{0},E_{1})$  the space of compactly supported $\G$-pseudodifferential operators \cite{C1982,MP,M,NWX}.  The principal symbol map is denoted by:
\[
 \sigma_{\pr} \colon \Psi^{m}_{\G}(E_{0},E_{1}) \to C^{\infty}(S(\A^{\ast}),\overline{\pi}^{\ast}(\Hom(E_0, E_1))).
\]
Here $S(\A^{\ast})$ is the sphere bundle of the dual Lie algebroid $\A^{\ast}$ of $\G$ and $\overline{\pi}$ denotes any of the projection maps of  $\A^{\ast}$ and $S(\A^{\ast})$ onto $M$. 

Let $\overline{\Psi^{m}_{\G}}(E_{0},E_{1})$ denote the closure of $\Psi^{m}_{\G}(E_{0},E_{1}))$ into $\mathrm{Mor}(\H^s(\G,E_0),\H^{s-m}(\G,E_1))$ \cite{Vassout}, where we fix once and for all $s = m$ in the sequel. For  simplicity we denote $\Psi_{\G}(E)=\overline{\Psi^{0}_{\G}}(E)$. We get a short exact sequence of $C^*$-algebras:
\begin{equation}\label{eq:principal-symbol-seq}
\xymatrix{
C^*(\G,\End(E))  \ar@{>->}[r] & \Psi_{\G}(E) \ar@{->>}[r]^-{\sigma_{\mathrm{pr}}} & C(S(\A^*),\overline{\pi}^{\ast}\End(E)).
}
\end{equation}
Now let $F\subset M$ be a closed subspace which is saturated, by which we mean that $s^{-1}(F)=r^{-1}(F)$, and set $O := M \setminus F$. Then $\G_F$ is a continuous family groupoid \cite{Paterson,LMN}. By restricting over $F$, we get the  $F$-indicial symbol map:
 \[
 I_{F}  \colon \Psi^{\ast}_{\G}(E_{0},E_{1})  \to  \Psi^{\ast}_{\G_{F}}(E_{0}|_{F},E_{1}|_{F}).
\]
Gathering both symbol maps, we get the \emph{$F$-joint symbol map}:
\begin{equation}
\sigma_{F,m}=(\sigma_{\pr}, I_{F}): \Psi^{m}_{\G}(E_{0},E_{1}) \longrightarrow  C(S(\A^{\ast}),\overline{\pi}^*\Hom(E_0, E_1)) \times  \overline{\Psi^{m}_{\G_{F}}}(E_{0}|_{F},E_{1}|_{F}).
\end{equation}
The range $\Sigma^{m}_{F}(E_0, E_1)$ of $\sigma_{F,m}$ is called the \emph{$F$-joint symbols space} and its closure is denoted by $\overline{\Sigma^{m}_{F}}(E_0, E_1)$. We will write $\Sigma_F$ for $\overline{\Sigma^{0}_{F}}$. 

 This gives the short exact sequence of $C^*$-algebras \cite{LMN}:
\begin{equation}\label{eq:F-joint-symbol-seq}
\xymatrix{
C^*(\G_O,\End(E))  \ar@{>->}[r] & \Psi_{\G}(E) \ar@{->>}[r]^-{\sigma_{F}} & \Sigma_F(E).
}
\end{equation}
where $C^*(\G_O,\End(E))$ is the closure of $C^\infty_c(G_O,r^*\End(E))$ into $\Psi_{\G}(E)$.
 
We recall that for any elliptic 
$A\in  \Psi^{1}_{\G}(E,E')$, the unbounded operator 
\[
\Lambda_E=(1+A^*A)^{1/2} 
\]
defined by functional calculus belongs to  
 $\overline{\Psi^{1}_{\G}}(E)$, realizes isomorphisms  
 \[
 \H^s(G,E)\iso \H^{s-1}(G,E) 
\] 
 and satisfies $\Lambda^{-1}_E\in C^*(\G,\End(E))$ \cite{LN2001, Vassout}. 
 
\begin{Def}\label{Def:full-ell}
 We  say that $P \in \overline{\Psi^{m}_{\G}}(E)$ is \emph{$F$-fully elliptic} if $\sigma_F(\Lambda^{-m}_EP)\in \Sigma_F(E)^\times$. 
\end{Def}
We now turn our attention to the natural groups constructed out of $F$-full elliptic operators \cite{Karoubi,Sk1983,S2005,AS2011}.  A convenient way to handle them is to use the so-called  $K$-group $K(f)$ of a homomorphism   $f :A\to B$  (\cite[II.2.13]{Karoubi}). We recall the definition: the set of cycles $\Gamma(f)$ is given by triples $(\E_{0},\E_{1},\alpha)$ where the $\E_{i}$ are finitely generated projective $A$-modules and $\alpha : \E_{0}\otimes_{f}B\to  \E_{1}\otimes_{f}B$ is an isomorphism. The notions of isomorphisms ($\simeq$) and direct sums ($\oplus$) in $\Gamma(f)$ are the obvious ones. A cycle $(\E_{0},\E_{1},\alpha)$ is elementary  if $\E_{0}=\E_{1}$ and $\alpha$ is homotopic to the identity within the automorphisms of $ \E_{0}\otimes_{f}B$. Finally, the equivalence relation giving $K(f)=\Gamma(f)/\sim$ is defined by: 
\[ \sigma \sim \sigma' \text{ if there exist elementary } \tau,\tau' \text{ such that } \sigma\oplus\tau \simeq \sigma' \oplus \tau'. \]
Using $K$-theory groups of morphisms is particularly relevant here. Indeed on the one hand and as suggested above, they allow to recover in an obvious way the stable homotopy groups of elliptic operators introduced \cite{S2005}. On the other hand, $K$-theory groups of morphisms are functorial (\cite{AS2011}) and always isomorphic in a natural way to the $K$-theory of a $C^*$-algebra: 
$$K(f)\simeq K_0(C_f)$$
 where $C_f$ is the mapping cone of $f$:
\[
C_{f} := \{(a, F) \in A \oplus C_0([0, 1), B) : F(0) = f(a)\}.
\]
Note that $K(f)\simeq K_0(\ker f)$ when $f$ is onto \cite{AS2011}.
We keep a notation inspired by Savin's one \cite{S2005}.  

\begin{Def}
The group $\Ell_{F}(\G)$ will be defined as the $K$-group $K(\mu_{F})$ of the natural homomorphism  $\mu_{F} : C(M)\longrightarrow \Psi_{\G}/C^*(\G_O)=\Sigma_F$.
\label{Def:Fh}
\end{Def}
We also consider another natural homomorphism $\mu_{\mathrm{pr}} : C(M)\to \Psi_{\G}/C^{*}(\G)\simeq C(S(\A^{*}))$.  If $P\in  \Psi^m_{\G}(E_{0},E_{1})$ is $F$-fully elliptic, $m\ge 0$, then it canonically defines  classes:
   \[  [P]_{F}=\big[\E_{0},\E_{1}, \sigma_{F}(\Lambda_{E_1}^{-m}P)\big] \in K(\mu_{F})=\Ell_{F}(\G), \]
    \[  [P]_{\mathrm{pr}}=\big[\E_{0},\E_{1}, \sigma_{\mathrm{pr}}(\Lambda_{E_1}^{-m}P)\big] \in K(\mu_{\mathrm{pr}})\simeq K^0_c(\A^*), \]
with $\E_j=C(X,E_j)$.  The resulting classes do not depend on the choice of $\Lambda_{E_1}$.  Indeed, consider $\Lambda_A=(1+A^*A)^{1/2}$ and  $\Lambda_B=(1+B^*B)^{1/2}$. Since $C(t)=(1-t)A^*A+tB^*B$ is elliptic self-adjoint and non negative we get a family $\Lambda_t= (1+C(t))^{1/2}\in \Psi_\G^1(E_1)$ of invertible elements connecting $\Lambda_A$ to $\Lambda_B$, and $\sigma_F(\Lambda_t^{-m}P)$ provides the desired homotopy. The same is true for the principal symbol classes. 

Note that there is a natural isomorphism $ C_{\mu_{\mathrm{pr}}}\simeq C_0(\A^*)$ and therefore:
\[ 
   K(\mu_{\mathrm{pr}})\simeq K^0_c(\A^*). 
\]
We will denote by $[P]_{\pr,\ev}$ the image of $[P]_{\pr}$ through this isomorphism.

\begin{Prop}\label{Prop:opposite-in-Krel}
Let  $P_{+}\in\Psi_{\G}(E_{+},E_{-})$ be $F$-fully elliptic. Then  $P_{+}^{*}$ is $F$-fully elliptic and 
\begin{equation}\label{eq:oppposite-in-Krel}
 [P_{+}]_{F} = - [P_{+}^{*}]_{F} \in K_{0}(\mu_{F}).
\end{equation}
\end{Prop}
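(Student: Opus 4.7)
The strategy is to reduce the whole statement to a manipulation of invertible elements in the $C^*$-algebra $\Sigma_F$, after unwinding the definition of the classes $[\cdot]_F$. First I would verify that the pseudodifferential adjoint is compatible with both $\sigma_{\pr}$ and $I_F$, giving $\sigma_F(P_+^*) = \sigma_F(P_+)^*$ in (the relevant corner of) $\Sigma_F$. Since the $C^*$-adjoint preserves invertibility, this immediately implies that $P_+^*$ is $F$-fully elliptic.

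Next, writing $\E_{\pm} = C(M, E_{\pm})$, the class $[P_+]_F$ equals $[\E_+, \E_-, \sigma_F(P_+)]$. Using the standard fact that swapping the two modules and inverting the isomorphism produces the additive inverse in $K(\mu_F)$, I would rewrite
\[
-[P_+^*]_F = [\E_+, \E_-, \sigma_F(P_+^*)^{-1}] = [\E_+, \E_-, (\sigma_F(P_+)^*)^{-1}].
\]
So it suffices to connect $a := \sigma_F(P_+)$ to $(a^*)^{-1}$ by a path of invertibles in the relevant corner of matrices over $\Sigma_F$, since a homotopy of the isomorphism through invertibles yields equality of the associated relative $K$-theory classes.

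The key step is polar decomposition in a unital $C^*$-algebra. I would write $a = v|a|$ with $v$ unitary in the appropriate corner and $|a| = (a^*a)^{1/2}$ positive invertible. From $a^* = |a|v^*$ one immediately gets $(a^*)^{-1} = v|a|^{-1}$, so the path
\[
t \in [0,1] \longmapsto v\,|a|^{1-2t}
\]
connects $a$ at $t=0$ to $(a^*)^{-1}$ at $t=1$, with $|a|^{1-2t}$ staying positive invertible throughout. This supplies the desired homotopy through invertibles and yields $[P_+]_F = -[P_+^*]_F$.

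The only real care needed is book-keeping: because $P_+$ connects distinct bundles, $\sigma_F(P_+)$ lives between the corners of a matrix algebra $M_n(\Sigma_F)$ cut out by projections representing $\E_+$ and $\E_-$, so the polar decomposition must be carried out in this matrix algebra, with $v$ a partial isometry implementing the equivalence of those two projections. I expect no substantive obstacle beyond this routine corner-wise reformulation, which is the standard way of phrasing polar decomposition for invertibles between distinct finitely generated projective modules.
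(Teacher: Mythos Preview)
Your proof is correct and follows essentially the same approach as the paper's: both rely on polar decomposition in $\Sigma_F$ to produce a homotopy of invertibles, together with the standard relation $[\E_0,\E_1,\alpha]+[\E_1,\E_0,\alpha^{-1}]=0$ in $K(\mu_F)$. The only difference is packaging---the paper first assembles $P_+$ and $P_+^*$ into a self-adjoint $2\times 2$ block matrix and performs the polar decomposition there (then reads off the off-diagonal part), whereas you apply polar decomposition directly to $a=\sigma_F(P_+)$ between the corners; your route is slightly more direct and avoids the $\Zz_2$-grading bookkeeping.
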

\begin{proof}
If $P_{+}$ is $F$-fully elliptic, then $P_{-}:=P_{+}^{*}$ is also  $F$-fully elliptic. Set $P=\begin{pmatrix} 0& P_{-} \\ P_{+} & 0 \end{pmatrix}\in \Psi_{\G}(E)$, $E=E_{+}+E_{-}$. We get by polar decomposition of the invertible element $p=\sigma_{F}(P)$ in the $C^{*}$-algebra $\Sigma_{F}(E)$ the existence of $U,T\in \Psi_{\G}(E)$ such that 
 $u=\sigma_{F}(U)$ is  unitary, $\vert p\vert =\sigma_{F}(T)$  and  $ p =u\vert p\vert$. Since $\vert p\vert $ is of degree $0$, since $p$ of degree $1$ with respect to the $\Zz_{2}$-grading and since $\vert p\vert$ is invertible,  the unitary $u$ is necessarily of degree $1$ with respect to the $\Zz_{2}$-grading.
Therefore, we can assume that $U=\begin{pmatrix} 0& U_{-} \\ U_{+} & 0 \end{pmatrix}$ and  $T=\begin{pmatrix} T_{+}& 0 \\ 0 & T_{-} \end{pmatrix}$. Since $p$ is selfadjoint, we  can also assume that the unitary $u$ is self-adjoint, thus   $u_{-}=u_{+}^{*}=u_{+}^{-1}$. Since $p$ is positive, we get a homotopy $p=u\vert p\vert \sim u$, therefore 
\[
[P_{+}]_{F} + [P_{+}^{*}]_{F}=  [\E,\E,p] =  [\E,\E,u] = [\E_{+},\E_{-},u_{+}]+ [\E_{-},\E_{+},u_{+}^{-1}] = 0 \in  K(\mu_{F}). 
\]
\end{proof}

\subsection*{Index and Poincar\'e duality results}

There is an index map coming with $\Ell_F(\G)$. Indeed, the commutative diagram 
 \[
\xymatrix{
 C(M)\ar[d]_{\mu_0} \ar[r]^{\mu_F} & \Sigma_F \ar[d]^{\Id}   \\
 \Psi_\G \ar[r]^{\sigma_F} &  \Sigma_F 
}
\]
gives rise to a homomorphism $K(\mu_F)\to K(\sigma_F)$ and since $\sigma_F$ is onto, we also have a natural isomorphism $K(\sigma_F)\simeq K_0(\ker \sigma_F)=K_0(C^*(\G_O))$ \cite{AS2011}. Their composition is called the ($F$-)index map:  
\begin{equation}\label{eq:Khom-F-ind}
 \ind_F \colon \Ell_F(\G) \to  K(C^*(\G_O)). 
\end{equation}
It is possible to get a slightly more geometrical description of  \eqref{eq:Khom-F-ind} by establishing a kind of Poincar\'e duality with the help of deformation groupoids. Let us introduce the necessary objects. 

The adiabatic groupoid $\Gad \rightrightarrows M_{\mathrm{ad}} := M \times [0,1]$ is the natural Lie groupoid integrating the Lie algebroid
$(\A^{\mathrm{ad}}, \varrho^{\mathrm{ad}})$ given by: $\A^{\mathrm{ad}} = \A \times [0,1]$ and $\varrho^{\mathrm{ad}} \colon \A^{\mathrm{ad}} \to TM \times T[0,1], \ \A^{\mathrm{ad}} \ni (x, v, t) \mapsto (x, tv, t, 0) \in TM \times T[0,1] = T M_{\mathrm{ad}}$. More precisely, 
\[\G^{\mathrm{ad}} = \A  \times \{0\} \cup \G \times (0,1] \text{ and } \A(\G^{\mathrm{ad}}) \cong \A^{\mathrm{ad}},\]
see also \cite{Connesbigbook, HS, NWX}.
Out of the adiabatic groupoid we construct the so-called \emph{$F$-Fredholm groupoid}: 
\begin{equation}\label{eq:F-Fredholm-gpd}
 \G^{\F}_F := \Gad \setminus (\G_{F} \times \{1\}) \rightrightarrows \MF[F] = (M \times [0,1]) \setminus (F\times \{1\})
\end{equation}
This is again a Lie groupoid (as an open subset of $\Gad$). The  \emph{non-commutative tangent bundle}  is defined by:
\begin{equation}\label{eq:F-nc-tangent-space}
\Tau_FM := \G^{\F}_F \setminus (G_O \times (0,1]) \rightrightarrows M_F= \MF[F] \setminus (O \times (0,1]) .
\end{equation}
It is  a $C^{\infty,0}$ groupoid \cite{Paterson}.  The exact sequence: 
\begin{equation}\label{eq:K-equi-GF-Tau}
\xymatrix{
C^{\ast}(\G_O\times (0,1]) \ar@{>->}[r] & C^{\ast}(\GF) \ar@{->>}[r]^{e_0} & C^{\ast}(\Tau_{F}M)
}
\end{equation}
possessing a contractible kernel and a nuclear quotient, we get an isomorphism $e_0: K_0(C^*(\G^\F_F))\to K_0(C^*(\Tau_FM))$, and moreover $e_0\in KK(C^*(\G^\F_F), C^*(\Tau_FM))$ provides a $KK$-equivalence (that is, is invertible).
 Considering the restriction $e_1: C^*(\G^\F_F)\to C^*(\G_O)$, we get another index map:  
\[
\ind^{\F}_F := (e_1)_{\ast} \circ (e_0)_{\ast}^{-1} \colon K_0(C^{\ast}(\Tau_FM)) \to K_0(C^*(G_O)). 
\]
The analogy to the commutative case is that $\Sigma_F$ is the \emph{noncommutative cosphere bundle}, relative to $F$, and $\Tau_FM$ is the \emph{noncommutative tangent bundle}, relative to $F$, associated with $\G\rightrightarrows M$, see also \cite[Section 5]{LMN}.  We end up with a Poincar\'e duality like theorem.

\begin{Thm}
There is a group isomorphism:
\begin{equation}\label{eq:Ell2nct}
\pd_F \colon \Ell_F(\G) \to K_0(C^{\ast}(\Tau_FM))
\end{equation}
 such that $e_0(\pd_F[P]_{F}) = [P]_{\pr,\ev} \in K^0_c(\A^*)$ and
$\ind^{\F}_F(\pd_{F}[P]_{F}) = \ind_F([P]_{F})$. Here $e_0$ is the restriction map $C^{\ast}(\Tau_FM) \to C^{\ast}(\A)$. 
\label{Thm:Fh-F}
\end{Thm}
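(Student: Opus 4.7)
The plan is to construct $\sigma^{\nc}_F$ via a deformation along the Fredholm groupoid, verify the compatibility identities by direct inspection of representatives, and then establish bijectivity by a pseudodifferential quantization argument combined, if needed, with a five-lemma comparison of six-term exact sequences.

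First I would construct $\sigma^{\nc}_F$ as follows. Given a fully elliptic $P \in \Psi^{m}_\G(E)$, I would lift $\Lambda_E^{-m}P$ to a $\G^{\F}_{F}$-pseudodifferential family $\tilde{P}$ such that for $t \in (0,1)$ it realizes the adiabatic interpolation from Connes' tangent groupoid (extending continuously at $t=0$ to the principal symbol $\sigma_{\pr}(P)$ on $\A$), while at $t=1$ the parametrix supplied by $F$-full ellipticity realizes $\tilde{P}$ as an element invertible modulo $C^{\ast}(\G_O)$. Then $\tilde{P}$ is invertible modulo the contractible ideal $C^{\ast}(\G_O \times (0,1]) \subset C^{\ast}(\G^{\F}_{F})$, and the class of its image in $C^{\ast}(\Tau_F M) \simeq C^{\ast}(\G^{\F}_{F})/C^{\ast}(\G_O \times (0,1])$ defines $\sigma^{\nc}_F([P]_F) \in K_0(C^{\ast}(\Tau_F M))$. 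Independence of the choices of parametrix and of the adiabatic interpolation, together with vanishing on elementary Karoubi cycles, would follow from standard contractibility arguments for paths of invertible symbols; combined with direct-sum additivity this produces a well-defined group homomorphism.

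The two compatibility identities then follow directly from the construction. At $t=0$ the groupoid is $\A$ and $\tilde{P}|_{t=0}$ is the principal symbol, giving $e_0(\sigma^{\nc}_F[P]_F) = [P]_{\pr,\ev}$. Since $\sigma^{\nc}_F[P]_F = (e_0)_\ast[\tilde{P}]$ by construction, one has $\ind^{\F}_F(\sigma^{\nc}_F[P]_F) = (e_1)_\ast[\tilde{P}] \in K_0(C^{\ast}(\G_O))$; the parametrix construction at $t=1$ shows this agrees with the image of $[P]_F$ under the connecting homomorphism of \eqref{eq:F-joint-symbol-seq}, which is by definition $\ind_F([P]_F)$.

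For bijectivity I would exhibit an inverse by a pseudodifferential quantization procedure: any class in $K_0(C^{\ast}(\Tau_F M))$ is represented, after stabilization, by an element near the identity which can be lifted to a $\G^{\F}_{F}$-pseudodifferential family and then evaluated at $t=1$ to produce a fully elliptic operator on $\G$ whose associated class in $K(\mu_F)$ recovers the initial one. That both compositions are the identity can either be checked directly on representatives, or deduced via the five lemma applied to the mapping cone exact sequence for $\mu_F$ and the six-term sequence coming from the ideal $C^{\ast}(\G_O \times (0,1])$, with the outer terms identified through the principal symbol sequence \eqref{eq:principal-symbol-seq}. I expect the main obstacle to be ensuring that the quantization preserves stable homotopy equivalence of fully elliptic operators: this requires working inside the $\G^{\F}_{F}$-pseudodifferential calculus rather than only with $C^{\ast}$-algebraic representatives, and tracking carefully how the $F$-indicial data interact with the adiabatic deformation near $t=1$.
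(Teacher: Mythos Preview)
Your construction of the map $\sigma^{\nc}_F$ via an adiabatic lift $\tilde P$ on $\G^{\F}_F$ is correct and in fact matches how the paper later describes the image of $\sigma^{\nc}_F$ explicitly (see the Kasparov-module formula for $[P]_{F,\ev}$). The compatibility identities with $e_0$ and $\ind^{\F}_F$ follow as you indicate. Where your route diverges from the paper is in the bijectivity argument. The paper does not attempt to build a quantization inverse on the level of operators. Instead it passes through the mapping cone: one has $K(\mu_F)\simeq K_0(C_{\mu_F})$ canonically, and the task becomes producing a $KK$-equivalence between $C_{\mu_F}$ and $C^*(\Tau_F M)$. The key observation (attributed to Skandalis) is that the pseudodifferential algebra $\Psi_{\G^{ad}_0}$ on the half-open adiabatic groupoid has a contractible principal-symbol quotient $C_0(S(\A^*)\times[0,1))$, so the inclusion $C^*(\G^{ad}_0)\hookrightarrow \Psi_{\G^{ad}_0}$ is a $K$-isomorphism, and likewise after quotienting by $C^*(\G_O\times(0,1))$. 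A short diagram chase plus the five lemma then shows that the natural map $C_{\mu_0}\to \Psi_{\G^{ad}_0}$ is a $KK$-equivalence, and passing to quotients gives $C_{\mu_F}\overset{KK}{\sim} C^*(\Tau_F M)$.

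What this buys: the paper's argument sidesteps exactly the obstacle you flagged, namely controlling how an abstract $K$-theory class in $K_0(C^*(\Tau_F M))$ can be represented by an honest fully elliptic pseudodifferential symbol with compatible $F$-indicial data. By working entirely with $C^*$-algebra homomorphisms and mapping cones, bijectivity becomes a formal consequence of contractibility and the five lemma, and no quantization procedure is needed. Your approach has the advantage of being more explicit at the cycle level and of making the two compatibility identities immediate; its cost is the delicate surjectivity step you correctly identify, which the paper's argument simply avoids.
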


\begin{proof}
By \cite{Karoubi,Sk1983,S2005} we have a natural isomorphism: 
\begin{equation}\label{eq:Ell-Kcone}
 \Ell_F(\G)=K(\mu_F) \simeq K_0(C_{\mu_F})
\end{equation}
 where $C_{f}$ denotes the mapping cone of $f$. The isomorphism 
$ K_0(C_{\mu_F}) \simeq K_0(C^{\ast}(\Tau_FM)) $ can be proved following verbatim the proof of Theorem 10.6 in \cite[Theorem 10.6]{DLR}. Alternatively, we use the following short argument\footnote{communicated to us by G. Skandalis}. Denote by $\G^{\mathrm{ad}}_0$ the restriction of $\G^{\mathrm{ad}}$ to $M\times[0,1)$.   Since $\Psi_{\G^{\mathrm{ad}}_0}/C^*(\G^{\mathrm{ad}}_0)\simeq C_0(S(\A^*)\times [0,1))$ is contractible, the inclusion  $C^*(\G^{\mathrm{ad}}_0)\to \Psi_{\G^{\mathrm{ad}}_0}$ is an isomorphism in $K$-theory, as well as the inclusion 
\[C^{\ast}(\Tau_FM)= C^*(\G^{\mathrm{ad}}_0)/C^*(\G_O\times (0,1)) \subset \Psi_{\G^{\mathrm{ad}}_0}/C^*(\G_O\times (0,1)).\]
If $\mu_0:C(M)\to \Psi_{\G}$ denotes the natural homomorphism, then we have a homomorphism $\kappa : C_{\mu_0}\to \Psi_{\G^{\mathrm{ad}}_0}$ given by $(f,P)\mapsto P$. Using on the one hand the commutative diagram 
\[
\xymatrix{
 C_0((0,1),\Psi_{\G}) \ar@{=}[d] \ar@{>->}[r] & C_{\mu_0} \ar[d]_{\kappa} \ar@{->>}[r] &  C(M) \ar[d]_{\kappa_0}  \\
 C_0((0,1),\Psi_{\G}) \ar@{>->}[r] &  \Psi_{\G^{\mathrm{ad}}_0} \ar@{->>}[r] & \Psi_{\A}. 
}
\]
and on the other hand that $\kappa_0 : C(M)\longrightarrow \Psi_{\A}$ is a $KK$-equivalence, we get by the five lemma that $\kappa$ is a $KK$-equivalence too. Quotienting both algebras $C_{\mu_0}$ and $\Psi_{\G^{\mathrm{ad}}_0}$ by the ideal $C^*(\G_O\times (0,1))$, we get that the homomorphism 
\[C_{\mu_F}\longrightarrow \Psi_{\G^{\mathrm{ad}}_0}/C^*(\G_O\times (0,1))\]
is a $KK$-equivalence. The remaining assertions in Theorem \ref{Thm:Fh-F} are then easy. 
\end{proof}


\begin{Rem}
Denote by $\partial \colon K_1(\Sigma_F) \to K_0(C^{\ast}(G_{O}))$ the connecting "index" map in the $K$-theory six-term exact sequence associated to the short exact sequence of the full symbol map \eqref{eq:F-joint-symbol-seq}.
By \cite{DL2009} we have that 
\[
\ind_{\F}^F(\pd_F[P]_F) = \partial [\sigma_F(P)]_1
\]

where by $[\sigma_F(P)]_1$ we denote the $K_1$-class of $P \in \Psi_{\G}(E)$ under the full symbol map $\sigma_F$. By Theorem \ref{Thm:Fh-F} we obtain that $\ind_F([P]_F) = \partial[\sigma_F(P)]_1$. In particular for $F = \partial M$ and $\G$ an integrated Lie manifold, the index map $\ind_F$ recovers the Fredholm index. 
\label{Rem:Fh-F}
\end{Rem}

\section{From abstract operators to tame Dirac operators}

\label{tameDirac}

\subsection*{Dirac bundles over groupoids}
Let $\G\rightrightarrows M$ be an amenable Lie groupoid with Lie algebroid $\A$. We equip $\A$ with a euclidean structure and denote by $\Cl(\A)$ the corresponding bundle of Clifford algebras over $M$. We define Dirac bundles as in \cite{ALN}, see also \cite[Definition 3.1]{B2009} and \cite{ALN2}. That is, we replace $TM$  by $\A$ in the definition of $c$, in other words we deal with a $\Cl(\A)$-module complex vector bundle $E$ over $M$ with Clifford multiplication induced by $c$, compatible hermitian metric $h$, admissible connection $\nabla$ and a $\Zz_{2}$-grading $z$ when $\A$ has even rank. Note that by \cite{ALN2} this construction leads to differential operators in the relevant pseudodifferential calculus, i.e. even differential operators generated by the vector fields that are sections of the Lie algebroid $\A$. Now by \cite{LN2001}, there is an associated Dirac operator $D\in\Diff_{\G}(E)$. 

For simplicity, we will denote such a Dirac bundle by  $(E,D)$, the necessary data being understood. 

\subsection*{$K$-tamings}
One can check that, for any $F$-fully elliptic $P\in\Psi_{\G}(E_{0},E_{1})$, we have:
\[
[P]_{F,\ev}:=\pd_{F}([P]_{F}) = \Big[  C^*(\Tau_FM,E_{0}\oplus E_{1}), 
   \begin{pmatrix}
             0 &  \Q|_{\Tau_FM}\\   \P|_{\Tau_FM}  & 0
   \end{pmatrix}\Big] \in KK(\Cc,C^*(\Tau_FM)).
\]
Here $\P$ is a $F\times\{1\}$-fully elliptic lift of $P$, that is  $\P\in \Psi_{\G^{\mathrm{ad}}}$,  $\P\vert_{t=1}=P$ and $\P\vert_{t=0}=\sigma_{\mathrm{pr}}(P)$; while $\Q$ is a similar lift of a full parametrix $Q$ of $P$. We refer to this situation as the \emph{even} case.

Now, if $P\in\Psi_{\G}(E)$ is selfadjoint and $F$-fully elliptic, which we refer to as the  \emph{odd}  case, we will consider instead of $[P]_{F}$ and $[P]_{\pr}$, the classes:
\[
 [P]_{F,\odd} =\Big[  C^*(\Tau_FM,E),  \P|_{\Tau_FM}   \Big] \in KK_{1}(\Cc,C^*(\Tau_FM)),
\]
\[
 [P]_{\pr,\odd} =\Big[  C^*(\A,E),  \sigma_{\pr}(P)   \Big] \in KK_{1}(\Cc,C^*(\A)),
\]
The convention is the same for positive order operators after using a suitable \emph{order reduction} operator $\Lambda$.

\begin{Def}\label{Def:tameable}
We say that  an elliptic operator $P\in\Psi^{*}_{\G}$  is  $F$-tameable if $[P]_{\pr,\ast}$ is in the range of 
\begin{equation}\label{eq:F-forget-map-nct}
  \tau=(\ev_{t=0})_{*} : K_{\ast}(C^{*}(\Tau_{F}M)) \longrightarrow K_{\ast}(C^{*}(\A)).
\end{equation}
\end{Def}
Using the $K$-equivalences $\G^{\F}_{F}\sim_{K} \Tau_{F}M$ and $\G^{\mathrm{ad}}\sim_{K} \A$, we see that the above condition is equivalent to be in the range of
\begin{equation}
  i_{*} : K_{*}(C^{*}(\G^{\F}_{F})) \longrightarrow K_{*}(C^{*}(\G^{\mathrm{ad}})).
\end{equation}
where $i : C^{*}(\G^{\F}_{F})\hookrightarrow C^{*}(\G^{\mathrm{ad}})$ is the inclusion. 

Also, in the even case, the natural homomorphism  $q_{F}: \Sigma_{F} \to C(S^{\ast} \A)$ induces a map: 
\begin{equation}\label{eq:F-forget-map}
q_{F} : K(\mu_{F}) \longrightarrow K(\mu_{pr})
\end{equation}
which coincides with $\tau$ under the suitable isomorphisms. In particular $ q_{F}([P]_{F})=  [P]_{\mathrm{pr}}$ and $F$-tameability of an elliptic operator $P\in \Psi^{*}_{\G}(E_{0},E_{1})$ is equivalent to  $[P]_{\pr}\in \im q_{F}$.

\subsection*{Reduction to Dirac}
We call $(S,D)$ an even Dirac bundle for $\G$ if  $S=S_{+}\oplus S_{-}$ is a $\Zz_{2}$-graded $\mathrm{Cl}(\A)$-module vector bundle over $X$ and $D=\mathrm{Antidiag}(D_{+},D_{+}^*)\in \Psi^1_\G(S)$ is a Dirac operator. Odd Dirac bundles are defined in the same way without gradings.  

\begin{Def}
Any $F$-fully elliptic operator of the form:
 $$ B = (D \oplus D') +R $$
where 
\begin{enumerate}
\item $(S',D')$ is a Dirac bundle of the same parity as $(S,D)$,
 \item $R\in \Psi^{-\infty}_{\G}(S\oplus S')$,
\item   $[B]_{\pr,\ast}= [D]_{\pr,\ast}$,
\end{enumerate} 
is called \emph{$F$-taming} of $D$.
\end{Def}

\begin{Thm}\label{thm:diracification}
Let $(S,D)$ be a Dirac bundle. Then for any $F$-fully elliptic operator $P\in \Psi_{\G}$ such that 
$[P]_{\pr,\ast}= [D]_{\pr,\ast}\in K^{\ast}(\A)$,
there exists a $F$-taming $B$ of $(S,D)$ such that 
\begin{equation}
 [B]_{F,\ast} = [P]_{F,\ast}\in K_{\ast}(C^{*}(\Tau_{F}X))\simeq K(\mu_{F}).
\end{equation}
\label{Thm:26}
\end{Thm}
\begin{proof}[Proof of the theorem] 
We begin with the even case and use for that the relative $K$-theory description. 
For any integer $N$, we can endow the trivial bundles $X\times\Cc^k$ with a $\mathrm{Cl}(\A)$-module structure (by embedding isometrically the bundle  $\A$ into some trivial bundle over $X$), where  $k\ge N $ is chosen in order to get an even Dirac bundle $(X\times\Cc^k,D_k)$. Then  $[D_{k,+}\oplus (-D_{k,+})]_{pr}=0\in K(\mu_{\mathrm{pr}})$ and we set $D'=D_{k}+(-D_{k})$. 
 
Let $P_{+}\in \Psi_{\G}(E_{+},E_{-})$ be a $F$-fully elliptic element such that $[P_{+}]_{pr}= [D_{+}]_{pr}$. By definition of $K(\mu_{\mathrm{pr}})$,  there exist elementary elements   
 \[ 
  (\xi,\xi,\alpha),(\xi',\xi',\alpha')\in \Gamma(\mu_{\mathrm{pr}})
 \]
such that 
 \[
    (\E_{+},\E_{-},\sigma_{\mathrm{pr}}(P_{+}))  + (\xi,\xi,\alpha)   \simeq  (\cS_{+},\cS_{-},\sigma_{\mathrm{pr}}(D_{+})) + (\xi',\xi',\alpha'). 
 \]
 Adding to both sides another elementary element, and renaming, we can assume that $\xi'=C(S(\A^*), \Cc^{k})$. We rename $\cS_{\pm}\oplus C(S(\A^*), \Cc^{k})$ into $\cS_{\pm}$ and identify $\E_{\pm}\oplus \xi\simeq \cS_{\pm}$.  Replacing $\alpha$ by $1$ and $\alpha'$ by $\sigma_{\mathrm{pr}}(D_{+}')$ after homotopies, and renaming $P_+\oplus 1$ into $P_+$, we get:
 \[
    \big(\cS_{+},\cS_{-},\sigma_{\mathrm{pr}}(P_+)\big)  \sim  \big(\cS_{+},\cS_{-},\sigma_{\mathrm{pr}}((D\oplus D')_{+})\big) ,
 \]
 and in particular 
 \begin{equation}\label{eq:homotopy-sigma-Q} 
  \sigma_{\mathrm{pr}}(P_+)^{-1}\sigma_{\mathrm{pr}}((D\oplus D')_{+})\sim \Id \text{ within } \mathrm{Aut}(\cS_{+}).
 \end{equation}
Now we use the homotopy lifting argument of \cite[Proposition 4.3]{DSconnect}.   Since $\cS_{\pm}=p_{\pm} C(S(A^*))^{N}$ for some projectors $p_{\pm}\in M_{N}(C(M))$,  the surjectivity of $q :  \Sigma_{F}\longrightarrow  C(S(\A^*))$ implies the surjectivity of 
  \[
  \widetilde{q}: \L(p_{+}(\Sigma_{F}^{N})) \longrightarrow  \L(\cS_{+}).
  \] 
 By the open mapping theorem, $\widetilde{q}$ is then open. Thus,  $\widetilde{q}(\mathrm{Aut}(p_{\pm}(\Sigma_{F}))_{(0)})$ is an open, and therefore closed, subgroup of $\mathrm{Aut}(\cS_{+})$, therefore: 
 \[ 
  \widetilde{q}(\mathrm{Aut}(p_{\pm}\Sigma_{F}^{N})_{(0)})=\mathrm{Aut}(\cS_{\pm})_{(0)}. 
 \]
By \eqref{eq:homotopy-sigma-Q},  we can choose  $y\in \mathrm{Aut}(p_{+}\Sigma_{F}^{N})_{(0)}$ such that 
 \begin{equation}\label{eq:homotopy-lifting} 
  \sigma_{\mathrm{pr}}(P_+) \widetilde{q}(y)=\sigma_{\mathrm{pr}}(P_+)y=\sigma_{\mathrm{pr}}((D\oplus D')_{+}).
 \end{equation}
Let $Y\in \Psi_{\G}(\cS_{+})$ such that $\sigma_{F}(Y)=y$. Let us set $T=\frac{1}{\sqrt{2}}\begin{pmatrix} 0 & (P_+Y)^{*} \\ P_+Y & 0 \end{pmatrix}$. Since $y$ is in the connected component of the identity in $\mathrm{Aut}(\cS_{\pm})$, we have $[T_{\pm}]_{F}=[P_{\pm}]_{F}$. If  $\Delta\in\Psi^{2}_{\G}(\cS)$ denotes a Laplacian, Equation \eqref{eq:homotopy-lifting} gives:
\begin{equation}
  T =  (1+\Delta)^{-1/2}(D\oplus D') + R \text{ with } R\in C^{*}(\G,\End(\cS)).
\end{equation}
We know that  the class  $\sigma_{F}(T)$  of $T$ in the $C^{*}$-algebra $\Psi_{\G}(\cS)/C^{*}(G_{O},\End(\cS))$ is invertible. By density of $C^{\infty}_{c}(\G,\End(\cS))$ into $C^{*}(\G,\End(\cS))$, we can choose $R_{1}\in C^{\infty}_{c}(\G,\End(\cS))$ such that $\sigma_{F}(T_{1})$  remains invertible, where $T_{1}=T-R+R_{1}$. We write $ R_{1}=(1+\Delta)^{-1/2}(1+\Delta)^{1/2}R_{1}$ and we approximate $(1+\Delta)^{1/2}R_{1}\in C^{*}(\G,\End(\cS))$ by an  element $R_{2}\in C^{\infty}_{c}(\G,\End(\cS))$ close enough so that  $\sigma_{F}(T_{2})$  remains invertible, where 
\[  T_{2} = (1+\Delta)^{-1/2}(D\oplus D') + (1+\Delta)^{-1/2}R_{2} .\]
Thus, $B:=(D\oplus D') + R_{2}$ is a $F$-fully elliptic operator and $R_{2} \in \Psi^{-\infty}_{\G}(\cS)$. Without loss of generality, we can assume that $R,R_{1},R_{2}$ are close enough so that $T$ and $T_{2}$ are homotopic among $F$-fully elliptic operators. We conclude: 
\[
    [B_{+}]_{F}=[T_{+}]_{F}=[P_{+}]_{F}=-[P_{-}]_{F}=- [B_{-}]_{F}.
\]

In the odd case now, we consider the map $\tau_{*}: KK_{1}(\Cc,C^{*}(\Tau_{F}X))\longrightarrow  KK_{1}(\Cc,C^{*}(\A))$.
Let $\P$ be a fully elliptic lift of $P$ and $p\in M_{N}(C(X))$ a projector such that $E=p(X\times\Cc^{N})$.
We have  $\tau_{*}[pC^{*}(\Tau_{F}X)^{N}, \P\vert_{\Tau_{F}X}] =[pC^{*}(\A)^{N},\sigma_{\mathrm{pr}}(P)]$, therefore by assumption $[pC^{*}(\A)^{N},\sigma_{\mathrm{pr}}(P)]=[pC^{*}(\A)^{N},\sigma_{\mathrm{pr}}(D)]\in KK_{1}(\Cc,C^{*}(\A))$. This means that after the addition of degenerate Kasparov modules we have a homotopy between $\sigma_{\mathrm{pr}}(P)$ and $\sigma_{\mathrm{pr}}(D)$ within the automorphisms of the $C^{*}$-algebra $\L(\tau\E)/\K(\tau \E)$ where $\E=C^{*}(\Tau_{F}X,E)$ and $\tau \E = C^{*}(\A,E)=\E\otimes_{\tau}C^{*}(\A)$. 
Now by lifting homotopies through the epimorphism of $C^{*}$-algebras:
\[
   \widetilde{\tau} : \L(\E)/\K( \E)\longrightarrow \L(\tau\E)/\K(\tau \E)
\]
as in the even case, and using again a density argument, we conclude the proof. 
\end{proof}

\subsection*{Lie manifolds}
If $M$ is a compact manifold with embedded corners, it is understood that a decomposition into closed faces is given, which are themselves compact manifolds with embedded corners inheriting the induced decomposition into faces. The set of codimension $k$ faces  of $M$ is denoted  by $\F_{k}M$. 

We will consider  Lie manifold structures on manifolds with embedded corners \cite{ALN}. Any such structure on $M$ is given by an almost injective Lie algebroid $\A$,  the latter being integrable by \cite{Deb} into a unique, up to isomorphism, $s$-connected Lie groupoid $\G$. We call \emph{integrated Lie manifold} any pair $(M,\G)$ such that 
\begin{enumerate}
\item  $\G\rightrightarrows M$ is a Lie groupoid over $M$;
\item $M_{0}=M\setminus \partial M$ is saturated and $\G_{M_{0}}=M_{0}\times M_{0}$,
\item $\G$ is amenable.
\end{enumerate}

Let  $(M,\G)$ be an integrated Lie manifold. We denote by $\Psi_{\V}^*(M; E_0, E_1)$ the algebra of  pseudodifferential operators of Lie type on $M_{0}=M\setminus \partial M$ acting between the sections of vector bundles $E_{j}\to M$   \cite{ALN}. It coincides with the image of $\Psi^{*}_{\G}(E_{0},E_{1})$ by the vector representation $r_{\#}$ when $\G$ is $s$-connected  \cite{ALN}.  By a slight abuse of notation, we continue to set  $\Psi_{\V}^*=r_{\#}\Psi^{*}_{\G}$ in the general case. Equivalently, $\Psi_{\V}^*$ is  isomorphic to the space obtained by restricting elements of   $\Psi^{*}_{\G}$ to the fiber of $\G$ over any arbitrary interior point $x$. The isomorphism comes then from the diffeomorphism $r : \G_{x}\to M_{0}$. 

The $\partial M$-indicial symbols and $\partial M$-joint symbols can be related with their analogues for any $F\in\F_1(M)$ in the obvious way, and we can summarize this in the following commutative diagram in which the outer rectangle is made of fibered products:
 \[
\xymatrix{
\Sigma_{\partial M}^{m}(E) \ar@{->}[d(1.8)]_{\pi_2} \ar@{->}[r(1.5)]^-{\pi_1} & &  C^{\infty}(S(\A^{\ast}), \overline{\pi}^{\ast} \End(E)) \ar[d(1.8)]^{(r_F)_{F \in \F_1(M)}} & & \\
& \ar@{->>}[ul]_{\sigma_{\pr} \oplus I_{\partial M}} \ar@{->>}[dl]_{I_{\partial M}} \Psi_{\G}^m(E) \ar@{->>}[ur]^{\sigma_{\pr}} \ar@{->>}[dr]_{} & \\
\prod_{F\in \F_{1}(M)} \Psi^{m}_{\G_{F}}(E|_{F}) \ar@{->>}[r(1.4)]^-{\oplus_{F} \sigma_{\pr, F}} & &  \prod_{F} C^{\infty}(S(\A_{|F}^{\ast}), \overline{\pi}^{\ast} \End(E_{|F})) & &
}
\]
We observe that with $F=\partial M$, we have $O=M\setminus \partial M=:M_0$ and thus $\G_O=M_0\times M_0$. It follows that the ideal in \eqref{eq:F-joint-symbol-seq} is the ideal of compact operators and that  $\ind_{\partial M}$ takes integer values. 
Since  $r_{\#}: C^*(\G_O,\End(E))\overset{\simeq}{\to} \K(L^2_\V(M,E))$ and $r_{\#} : \Psi_{\G}(E) \hookrightarrow \B(L^2_\V(M,E)) $ \cite{ALN},  the sequence \eqref{eq:F-joint-symbol-seq} also shows that $r_{\#}(P) : H_{\V}^s(M; E) \to H_{\V}^{s-m}(M; E)$ is Fredholm, when $P\in \Psi_{\G}^m(E)$ is \emph{$\partial M$-fully elliptic}, and that   $\ind_{\partial M}$ computes its Fredholm index. Here  $H_{\V}^s(M,E)$ are the Sobolev spaces of sections of $E$ associated with the Lie structure and $ L^2_\V= H_\V^0$. 

\medskip \emph{From now on, whenever we take $F=\partial M$ as a closed saturated subspace of an integrated Lie manifold $(M,\G)$, we will omit it in the notation, for instance: fully means $\partial M$-fully, $\G^\F$ denotes $\G^\F_{\partial M}$, etc ... and we set $\EllV(M)=\Ell_{\partial M}(\G)$.}

 \subsection*{Pullbacks}\label{DeformGrpds}

Let   $\varphi : \Sigma \longrightarrow M$  be a  surjective tame submersion, that is, a surjective submersion satisfying \eqref{eq:tame-submersion}. Then the decomposition into faces $\F_{*}(M)$ can be lifted to a decomposition into faces of $\Sigma$. 
 An interesting example occurs in the so-called clutching spaces and vector bundle modification:

\begin{Ex}
Let $M$ be a manifold with corners and $\pi :V\to M$ be a real vector bundle. Consider the sphere bundle $\varphi: Z=S(V\oplus \Rr)\to M$. Then $\varphi$ is a tame surjective submersion. We say that  $Z$ is the clutching space  of  $\pi :V\to M$. 
\end{Ex}
We recall the notion of pull-backs.
\begin{Def}
Let $(\A, \varrho)$ be a Lie algebroid over $M$ and $\G \rightrightarrows M$ a Lie groupoid over $M$.
Let  $\varphi \colon \Sigma \to M$ be a surjective submersion.

\emph{(1)} The \emph{Lie algebroid pullback} of $(\A, \varrho)$ over $\Sigma$  is the Lie-algebroid $(\Aphi, \rhophi)$ given by:
\[
\Aphi = \{(v, w) \in \varphi^{\ast} \A \times T \Sigma :  \varrho(v)= d\varphi(w) \} \subset \varphi^{\ast} \A \oplus T \Sigma
\text{ and } \rhophi=\mathrm{pr}_2. 
\]

\emph{(2)} The \emph{Lie groupoid pullback} of $\G$ over $\Sigma$ is the Lie groupoid $\Gphi \rightrightarrows \Sigma$ given by 
\[
\Gphi = \{(x, \gamma, y) \in \Sigma \times \G \times \Sigma : \varphi(x) = r(\gamma), \ \varphi(y) = s(\gamma)\},
\]
with structure maps: $\rangephi(x, \gamma, y) = x$, $\sourcephi(x, \gamma, y) = y$, $(x,\gamma, y)(y,\gamma',z)=(x,\gamma\gamma',z)$. 
\label{Def:thickpb}
Note that the Lie groupoid pullback of the Lie algebroid $\A$ also makes sense and will be denoted by $\presuper{\varphi}{\textbf{A}}$ if a risk of confusion occurs. 
\end{Def}


Both notions are consistent since: 
\begin{Thm}
Let $(M, \G)$ be an integrated Lie manifold and  $\varphi \colon \Sigma \to M$ be a tame surjective submersion. 
Then $(\Sigma, \Gphi)$ is an integrated Lie manifold such that  $\A(\Gphi) \cong \Aphi$. \\
Moreover, $\Gphi$ is Morita equivalent to $\G$ and the (\emph{$\V$-related}) Lie structure $\Vphi$ is given by:
\[
\Vphi=\{ \widetilde{V} \in \Gamma(T \Sigma)\ ;  \exists V\in\V, \ d\varphi(\widetilde{V})=V\circ\varphi\}.
\]
 
\label{Thm:thickpb}
\end{Thm}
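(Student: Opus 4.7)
The plan is to verify the four assertions in order: (i) $\Sigma$ inherits a compact manifold-with-corners structure compatible with $\varphi$; (ii) $\Gphi$ is a Lie groupoid whose algebroid is $\Aphi$; (iii) the Lie manifold axioms hold, namely $\Sigma_0=\Sigma\setminus\partial\Sigma$ is saturated and $\Gphi|_{\Sigma_0}=\Sigma_0\times\Sigma_0$; and finally (iv) both the Morita equivalence and the formula for $\Vphi$.

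For (i), the tameness condition \eqref{eq:tame-submersion} forces $\varphi$ to preserve the codimension of points and the fibers to have no boundary, so pulling back the face decomposition of $M$ produces a face decomposition of $\Sigma$ with $\F_k(\Sigma)=\varphi^{-1}(\F_k(M))$ (up to connected components) and embedded boundary hypersurfaces. In particular $\partial\Sigma=\varphi^{-1}(\partial M)$ and $\Sigma_0=\varphi^{-1}(M_0)$. For (ii), the subset $\Gphi\subset\Sigma\times\G\times\Sigma$ is the fibered product built from the three tame submersions $\varphi$, $r$, $s$; standard tame transversality shows that $\Gphi$ is a smooth submanifold with embedded corners and that $\rangephi,\sourcephi$ are again tame submersions. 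The structural operations are inherited componentwise from those of $\G$ and hence are smooth. To identify $\A(\Gphi)$ at a point $x\in\Sigma$, a tangent vector in $T_{(x,u_{\varphi(x)},x)}\Gphi$ annihilated by $d\sourcephi$ is a triple $(v,\xi,0)$ with $v\in T_x\Sigma$, $\xi\in T_{u_{\varphi(x)}}\G$ satisfying $d\varphi(v)=dr(\xi)$ and $ds(\xi)=0$; the latter forces $\xi\in\A_{\varphi(x)}$ with $dr(\xi)=\varrho(\xi)$, producing precisely $\Aphi_x=\{(v,\xi)\in T_x\Sigma\oplus \A_{\varphi(x)} : d\varphi(v)=\varrho(\xi)\}$.

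For (iii), saturation of $\Sigma_0$ follows from the saturation of $M_0$ in $\G$: given $(x',\gamma,x)\in\Gphi$ with $x\in\Sigma_0$, we get $s(\gamma)=\varphi(x)\in M_0$, hence $\gamma\in\G|_{M_0}=M_0\times M_0$, so $r(\gamma)=\varphi(x')\in M_0$ and thus $x'\in\Sigma_0$. Moreover $\Gphi|_{\Sigma_0}$ consists of triples $(x,\gamma,y)$ with $x,y\in\Sigma_0$ and $\gamma\in\G|_{M_0}=M_0\times M_0$; since such a $\gamma$ is uniquely determined by $(\varphi(x),\varphi(y))$, the projection onto $(x,y)$ gives the required isomorphism with $\Sigma_0\times\Sigma_0$. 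For (iv), the Morita equivalence is the standard one for pullbacks along a surjective submersion: the fibered product $Z=\Sigma\times_{\varphi,r}\G$ carries commuting free and proper actions of $\Gphi$ on the left and $\G$ on the right, with quotients $Z/\G\cong\Sigma$ and $\Gphi\backslash Z\cong M$. Finally, $\Vphi=\rhophi\,\Gamma(\Aphi)=\mathrm{pr}_2\,\Gamma(\Aphi)$ consists of those $\tilde V\in\Gamma(T\Sigma)$ for which there exists $\tilde v\in\Gamma(\varphi^\ast\A)$ with $\varrho\circ\tilde v=d\varphi\circ\tilde V$. Because $\varphi$ is a surjective submersion, such a lift $\tilde v$ exists if and only if $d\varphi(\tilde V)$ is pointwise of the form $V\circ\varphi$ for some $V\in\V=\varrho(\Gamma(\A))$, yielding the announced description of $\Vphi$.

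The main obstacle I anticipate is step (ii): certifying that the double fibered product $\Gphi$ is smooth as a manifold with embedded corners and that its source and range maps remain tame submersions. This is the point where the tameness conditions on $\varphi$, $r$, $s$ all have to be combined and propagated through the fibered product, and it is also what ensures that the pullback algebroid $\Aphi$ (a priori only a vector bundle with compatible anchor) actually coincides with $\A(\Gphi)$. Once this smoothness and tameness package is established, the algebroid identification, the Lie-manifold axioms, the Morita equivalence, and the description of $\Vphi$ all reduce to direct diagram chases in the fibered-product definitions.
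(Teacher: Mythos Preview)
The paper does not actually prove this theorem: it simply attributes the algebroid isomorphism to Mackenzie, the Morita equivalence to Debord--Skandalis, and the Lie-manifold structure on $\Sigma$ to Nistor, with further references to Zenobi. Your proposal therefore goes well beyond what the paper does, supplying direct arguments for each assertion. Your treatment of (i)--(iii) and of the Morita equivalence via the bibundle $\Sigma\times_{\varphi,r}\G$ is correct and standard; your own diagnosis that the delicate point is the smoothness and tameness of the double fibered product in (ii) is accurate, and this is precisely what the cited references establish.

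There is, however, a genuine gap in your argument for the description of $\Vphi$. You claim that a lift $\tilde v\in\Gamma(\varphi^{*}\A)$ with $\varrho\circ\tilde v=d\varphi\circ\widetilde V$ exists \emph{if and only if} $d\varphi(\widetilde V)=V\circ\varphi$ for some $V\in\V$. The forward implication fails: sections of $\varphi^{*}\A$ need not be pulled back from $M$, so $d\varphi(\widetilde V)$ need not be constant along the fibres of $\varphi$. Concretely, take $M=\Rr$, $\G=\Rr\times\Rr$, $\Sigma=\Rr^{2}$, $\varphi(x,y)=x$; then $\Aphi\cong T\Rr^{2}$ and $\rhophi$ is an isomorphism, so $\Vphi=\Gamma(T\Rr^{2})$, yet $\widetilde V=y\,\partial_{x}$ is not $\varphi$-related to any vector field on $\Rr$. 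What is true is that the set of $\varphi$-related lifts \emph{generates} $\Gamma(\Aphi)$ as a $C^{\infty}(\Sigma)$-module (in the example, $\partial_{x}$ and $\partial_{y}$ both lie in the displayed set). The formula in the statement should be read in this sense, and your argument needs to be adjusted accordingly: show that the $\varphi$-related vector fields span $\rhophi\,\Gamma(\Aphi)$ over $C^{\infty}(\Sigma)$, rather than exhaust it set-theoretically.
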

This is a compilation of known facts: the  isomorphism $\A(\Gphi) \cong \Aphi$ is proved in \cite{Mackenzie}, the Morita equivalence is proved in \cite{DS}, which ensures that $\Gphi$ is amenable too \cite{AR}, that $\Gphi$ gives a Lie structure on $\Sigma$ is done in \cite{N2015} and further developments can be found in \cite{VZ2016}.

\bigskip Let $(M,\G)$ be an integrated Lie manifold and $\varphi \colon \Sigma \to M$ a tame surjective submersion. The operations consisting of taking the adiabatic deformation and taking a pullback do not commute, however there are natural deformation groupoids relating them and this fact is exploited in \cite{VZ2016} in order to obtain pushforward maps (which are there occurrences of wrong way maps). We need to recall the results of \cite{VZ2016} in our notation. Firstly, there is a deformation Lie groupoid:
\begin{equation}\label{eq:thomlike}
 \L_\varphi= (\Gphi)^{\mathrm{ad}} \times\{u=0\} \cup \presuper{\varphi_1}(\G^{\mathrm{ad}}) \times(0,1]_u  \rightrightarrows \Sigma\times[0,1]_t\times[0,1]_u,
\end{equation}
where $\varphi_1=\varphi\times\Id : \Sigma\times[0,1]_t\to M\times[0,1]_t$. 
Let $z$ be  the lift by $\varphi$ of a  defining function for the boundary of $M$ and consider the following saturated subgroupoids of  $\L_\varphi$:
\begin{equation}\label{eq:thomlike-Fredholm}
 \L_\varphi^\F := \L_\varphi|_{(z,t)\not=(0,1)} = (\Gphi)^\F\times\{u=0\} \cup \presuper{\varphi_1}(\G^{\F}) \times(0,1]_u,
\end{equation}
\begin{equation}\label{eq:thomlike-nc}
 \L_\varphi^{\mathrm{nc}}:=\L_\varphi|_{tz=0}= \Tau \Sigma \times\{u=0\} \cup \presuper{\varphi_1}\Tau M \times(0,1]_u  \rightrightarrows \Sigma_{\partial\Sigma}\times[0,1]_u,
\end{equation}
\begin{equation}\label{eq:thomlike-A}
 \L_\varphi^{0}:=\L_\varphi|_{t=0}= \Aphi \times\{u=0\} \cup \presuper{\varphi}{\textbf A}\times(0,1]_u  \rightrightarrows M\times[0,1]_u. 
\end{equation}
 Using the restriction homomorphisms  $\ev_{u=i}$,  $i=0,1$, and the  natural Morita equivalence $\M$ we get homomorphisms (see \cite{VZ2016} for more details):
\begin{equation}\label{eq:push-forward-ad}
  \varphi^{\mathrm{ad}}_{!}=  \M\circ [\ev_{u=1}]\circ [\ev_{u=0}]^{-1} : K_\ast(C^*((\Gphi)^{\mathrm{ad}})) \longrightarrow K_\ast(C^*(\G^{\mathrm{ad}}))).
\end{equation}
Considering the variations \eqref{eq:thomlike-Fredholm}, \eqref{eq:thomlike-nc} and \eqref{eq:thomlike-A} of \eqref{eq:thomlike} leads to similar homomorphisms:
 \begin{equation}\label{eq:push-forward-F}
  \varphi^{\F}_{!} : K_\ast(C^*((\Gphi)^{\F})) \longrightarrow K_\ast(C^*(\G^{\F}))),
\end{equation}
\begin{equation}\label{eq:push-forward-nct}
  \varphi^{\mathrm{nc}}_{!} : K_\ast(C^*(\Tau \Sigma)) \longrightarrow K_\ast(C^*(\Tau M)),
\end{equation}
\begin{equation}\label{eq:push-forward-A}
  \varphi^{0}_{!} : K_\ast(C^*(\Aphi)) \longrightarrow K_\ast(C^*(\A)).
\end{equation}
 Then the following result is essentially a rephrasing of  \cite{VZ2016}:
 \begin{Thm}\label{Thm:pushforward-respect-indices}
Let $(M, \G)$ be an integrated Lie manifold and $\varphi : \Sigma \to M$ a tame surjective submersion. Then the map $  \varphi^{\mathrm{nc}}_{!} $ commutes with  Fredholm index: 
\begin{equation}\label{eq:pushforward-respect-indices}
    \ind_{\partial M}^{\F}\circ  \varphi^{\mathrm{nc}}_{!} = \ind_{\partial \Sigma}^{\F}
\end{equation}
Here the target group of the index maps $ \ind_{\bullet}^{\F}$ is replaced by $\Zz$ after applying the obvious Morita equivalences. 

 In other words, if  $B\in \Psi_{\Gphi}^{*}(\widetilde{E}_{+},\widetilde{E}_{-})$ and $P\in \Psi_{\G}^{*}(E_{+},E_{-})$ are fully elliptic operators  and satisfy $ \varphi^{\mathrm{nc}}_{!} [B]_{\partial\Sigma,\ev}=  [P]_{\partial\Sigma,\ev}$ then $B$ and $P$ have the same Fredholm index. 
 \end{Thm}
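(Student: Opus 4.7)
The plan is a diagram chase in $K$-theory using the two-parameter deformation groupoid $\L_\varphi^\F$ of \eqref{eq:thomlike-Fredholm} as a bridge between the noncommutative tangent deformation (direction $t$) and the $\varphi$-pullback deformation (direction $u$). The crucial geometric input is that $t$- and $u$-restrictions of $\L_\varphi^\F$ commute, since they extract complementary faces of the square $[0,1]_t \times [0,1]_u$; consequently they produce a commutative $K$-theoretic diagram in which the desired identity follows tautologically once the bottom row is identified with $\Zz$.

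First I would verify the two-parameter analogue of \eqref{eq:K-equi-GF-Tau}: the inclusion $\L_\varphi^{nc} \hookrightarrow \L_\varphi^\F$ as a saturated closed subgroupoid over $\{tz = 0\}$ fits into a short exact sequence whose ideal is supported on the interior locus $t \in (0,1]$, $z > 0$ and is of the form (continuous family of pair-type $\K$'s over $[0,1]_u$) $\otimes\, C_0((0,1]_t)$, hence $K$-contractible. This yields a parametrized Fredholm index
\[
\widetilde{\ind}_\varphi \colon K_*(C^*(\L_\varphi^{nc})) \xrightarrow{\sim} K_*(C^*(\L_\varphi^\F)) \xrightarrow{(\ev_{t=1})_*} K_*(C^*(\L_\varphi^\F|_{t=1})),
\]
which is natural with respect to the $u$-evaluations of $\L_\varphi^\F$.

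Combining naturality in the $u$-direction with the Morita equivalence $\M$ at $u=1$ gives the commutative diagram
\[
\begin{tikzcd}[column sep=small]
K_*(C^*(\Tau \Sigma)) \arrow[d, "\ind^\F_{\partial\Sigma}"'] & K_*(C^*(\L_\varphi^{nc})) \arrow[l, "(\ev_{u=0})_*"', "\sim"] \arrow[d, "\widetilde{\ind}_\varphi"] \arrow[r, "(\ev_{u=1})_*"] & K_*(C^*({}^{\varphi_1}\Tau M)) \arrow[d] & K_*(C^*(\Tau M)) \arrow[l, "\M"', "\sim"] \arrow[d, "\ind^\F_{\partial M}"] \\
\Zz & \Zz \arrow[l, "\sim"'] \arrow[r, "\sim"] & \Zz & \Zz \arrow[l, "\sim"']
\end{tikzcd}
\]
in which the bottom row records $K_0$ of the $u$-slices of $\L_\varphi^\F|_{t=1}$: at $u=0$ the pair groupoid $\Sigma_0 \times \Sigma_0$, and at $u > 0$ the tame-pullback ${}^{\varphi_1}(M_0 \times M_0)$, each Morita equivalent to $\K$ with canonical $K_0 \cong \Zz$; the horizontal bottom arrows thereby collapse to identities.

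By the definition \eqref{eq:push-forward-nct}, the top composition $\M^{-1} \circ (\ev_{u=1})_* \circ (\ev_{u=0})_*^{-1}$ is exactly $\varphi^{nc}_!$, so commutativity of the above diagram together with the trivialization of the bottom row yields $\ind^\F_{\partial M} \circ \varphi^{nc}_! = \ind^\F_{\partial \Sigma}$, which is \eqref{eq:pushforward-respect-indices}. The operator-theoretic rephrasing of the theorem is then immediate from Theorem \ref{Thm:Fh-F}, which identifies the Fredholm index of a fully elliptic operator with $\ind^\F$ applied to the image of its relative $K$-theory class under $\sigma^{nc}$. The principal technical obstacle is the last step of the diagram reduction: verifying that the Morita equivalence implicit in $(\ev_{u=1})_*$ at the $t=1$ slice preserves the canonical generator of $K_0(\K)$, and more generally that the family $\widetilde{\ind}_\varphi$ acts as the identity on $\Zz$ at the level of the bottom row. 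This is essentially a canonicity check of the Morita equivalences of \cite{VZ2016}, but it must be coherently aligned with the specific definition of $\ind^\F_{\partial\bullet}$ used here.
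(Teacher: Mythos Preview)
Your proposal is correct and follows essentially the same route as the paper: both arguments exploit the two-parameter deformation groupoid $\L_\varphi^\F$ and the fact that the $t$- and $u$-restrictions commute, reducing the claim to a diagram chase whose bottom row collapses to identities on $\Zz$ via Morita equivalences. The paper displays this as an explicit $4\times 4$ commutative diagram (rows indexed by $u=0$, the full $\L_\varphi^{\bullet}$, $u=1$, and the Morita-reduced target; columns by $\Tau$, $\G^\F$, the pair groupoid at $t=1$, and $\Zz$), whereas you compress the middle portion into a single ``parametrized Fredholm index'' $\widetilde{\ind}_\varphi$; the content is the same, and your cautionary remark about aligning the Morita generators is exactly what the paper's rightmost column of identities on $\Zz$ is recording.
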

\begin{proof}
Using the groupoid $\L_{\varphi}$ and the definition of $\ind^{\F}$ we get a commutative diagram:
\begin{equation}\label{cd:push-comm-ind}
\begin{tikzcd}
   K_0(C^*(\Tau \Sigma))  \arrow[bend right=70]{ddd}{\varphi^{\nc}_{!}}   \arrow[bend left=20]{rrr}{\ind^{\F}_{\partial \Sigma}} &  K_0(C^*((\Gphi)_{\partial\Sigma}^{\F}))  \arrow[l, "\ev_{tz=0}"'] \arrow{r}{\mathrm{ev}_{t=1}} & K_0(C^*(\Sigma_{0}\times\Sigma_{0})) \arrow[r, "\M"] & \Zz \ar[equal]{d} \\
   K_0(C^*(\L^{\nc}_{\varphi}))\arrow[u, " \ev_{u=0}"'] \arrow[d, "\ev_{u=1}"]  &  K_0(C^*(\L^{\F}_{\varphi}))   \arrow[l, "\ev_{tz=0}"'] \arrow[u, "\ev_{u=0}"'] \arrow[d, "\ev_{u=1}"] \arrow{r}{\mathrm{ev}_{t=1}} & K_0(C^*(\Sigma_{0}\times\Sigma_{0}\times [0,1])) \arrow[u, "\ev_{u=0}"']\arrow[d, "\ev_{u=1}"]  \arrow[r, "\M\times p_{*}"] & \Zz \ar[equal]{d}  \\
   K_0(C^*(\presuper{\varphi}\Tau M))\arrow[d, "\M"]  &  K_0(C^*(\presuper{\varphi}(\G_{\partial M}^{\F})))   \arrow[l, "\ev_{tz=0}"'] \arrow[d, "\M"]  \arrow{r}{\mathrm{ev}_{t=1}} & K_0(C^*(\Sigma_{0}\times\Sigma_{0}))\arrow[d, "\M"]   \arrow[r, "\M"] & \Zz \ar[equal]{d} \\
 K_0(C^*(\Tau M))  \arrow[bend right=20]{rrr}{\ind^{\F}_{\partial M}} &  K_0(C^*(\G_{\partial M}^{\F}))   \arrow[l, "\ev_{tx=0}"'] \arrow{r}{\mathrm{ev}_{t=1}} & K_0(C^*(M_{0}\times M_{0}))  \arrow[r, "\M"] & \Zz \\ \end{tikzcd}
\end{equation}
Above $p=\ev_{u=\alpha}$ for arbitrary $\alpha\in [0,1]$. The result follows immediately. 
\end{proof}

\begin{Thm}\label{Thm:Thom-Morita-Kfunctor}
Let $(M, \G)$ be an integrated Lie manifold,  $\pi : V\to M$ be a real vector bundle and denote by $\varphi : \Sigma=S(V\oplus\Rr)\to M$  the associated clutching and $i : V\hookrightarrow \Sigma$ the embedding as an open hemisphere. 
Then 
\begin{enumerate}
 \item $\pi_!^{nc}$ and $\pi_!^{0}$ are isomorphisms, the latter being the inverse of the Thom isomorphism of the complex bundle $\presuper{\pi}\A\to A$. 
 \item We have the identities:
 \begin{equation}\label{eq:Thom-pushforward}
 \varphi_!^{\bullet} \circ \big( i_*  \circ (\pi_!^{\bullet})^{-1}) = \Id ,  \quad \text{ with } \bullet =nc  \text{ or }  0. 
 \end{equation}
\end{enumerate}
\end{Thm}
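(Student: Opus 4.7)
The key geometric observation is that, identifying $V$ with the open upper hemisphere of $\Sigma=S(V\oplus\Rr)$, one has $\varphi\circ i=\pi$, so $i$ is an open embedding and $\pi$ factors through $\varphi$ via $i$.

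For part~(1), I would treat $\pi_!^{0}$ first. As a real vector bundle, $\presuper{\pi}{\A}\simeq\pi^{\ast}\A\oplus\ker d\pi\simeq\pi^{\ast}\A\oplus\pi^{\ast}V$, so the pullback algebroid is abelian and $C^{\ast}(\presuper{\pi}{\A})\simeq C_{0}((\presuper{\pi}{\A})^{\ast})$. The deformation groupoid $\L_{\pi}^{0}$ interpolates between $\presuper{\pi}{\A}$ at $u=0$ and $\presuper{\pi}{\textbf A}$ at $u=1$, the latter being Morita equivalent to $\A$ via $\pi$. Computing the Kasparov element associated with the pair $(\ev_{u=0},\ev_{u=1})$ on $\L_{\pi}^{0}$ identifies $\pi_{!}^{0}$ with the inverse of the Thom isomorphism of the complex bundle $\presuper{\pi}{\A}\to\A$ (whose fiber is $\pi^{\ast}V$), hence an isomorphism. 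For $\pi_{!}^{nc}$, I would use the evaluations at $t=0$ that link the deformation groupoids defining $\pi_{!}^{nc}$ to those defining $\pi_{!}^{0}$, producing a commutative ladder of six-term exact sequences; the five lemma then reduces invertibility of $\pi_{!}^{nc}$ to the case already handled.

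For part~(2), I would exploit the naturality of the pullback and of the deformation groupoid construction with respect to composition of submersions. Since $\varphi\circ i=\pi$ and $i$ is an open embedding, $\presuper{\pi}{\G}$ sits as the open subgroupoid $(\presuper{\varphi}{\G})|_{V}$ of $\presuper{\varphi}{\G}$, and the entire family $\L_{\pi}^{\bullet}$ is similarly the restriction of $\L_{\varphi}^{\bullet}$ to the upper hemisphere, for both $\bullet=nc$ and $\bullet=0$. Consequently $i_{\ast}$ is induced by the inclusion of the corresponding $C^{\ast}$-ideal (extension by zero), and the evaluations $\ev_{u=0}$, $\ev_{u=1}$ defining $\varphi_{!}^{\bullet}$ and $\pi_{!}^{\bullet}$ commute with this inclusion. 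It follows that $\varphi_{!}^{\bullet}\circ i_{\ast}=\pi_{!}^{\bullet}$, and composing on the right with $(\pi_{!}^{\bullet})^{-1}$ yields the desired identity. The main obstacle is the explicit identification in~(1) of $\pi_{!}^{0}$ with the inverse Thom isomorphism: one must match the Fourier-transformed models of $C^{\ast}(\presuper{\pi}{\A})$ and $C^{\ast}(\presuper{\pi}{\textbf A})$ with the source and target of the Thom homomorphism of $\pi^{\ast}V$, and verify that the $KK$-inverse of $\ev_{u=0}$ along $\L_{\pi}^{0}$ really coincides with the Bott element of $\pi^{\ast}V$. Once this is pinned down the rest is naturality and a diagram chase.
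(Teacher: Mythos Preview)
Your proposal is correct and follows essentially the same route as the paper: for part~(1) the paper also identifies $\L_{\pi}^{0}$ as the Thom groupoid of the complex bundle $\presuper{\pi}\A\to\A$ (citing \cite{DLN} for the computation you flag as the main obstacle) and then applies the five lemma to the long exact sequence linking $\pi_!^{nc}$ to $\pi_!^{0}$; for part~(2) it likewise uses the open inclusion $\L_\pi^\bullet\hookrightarrow\L_\varphi^\bullet$ induced by $i$ to obtain $\varphi_!^\bullet\circ i_\ast=\pi_!^\bullet$, packaged as a pair of commutative squares.
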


\begin{proof}
Considering the groupoids $\L_\pi$, $\L_\varphi$ and the embeddings provided by $i$, we get  commutative diagrams:
    
\begin{equation}\label{cd:thomlike}
\begin{tikzcd}
 \cdots \arrow{r} & K_0(C^*(\Gphi|_{\partial\Sigma}\times (0,1)) \arrow{r}\arrow[d, "\M"]  &  K_0(C^*(\Tau \Sigma)) \arrow{r}{\mathrm{ev}_{t=0}}\arrow[d, "\varphi^{\mathrm{nc}}_{!}"]  & K_0(C^*(\Aphi)) \arrow[d, "\varphi^{0}_{!}"] \arrow{r} & \cdots \\
 \cdots \arrow{r} & K_0(C^*(\G|_{\partial M}\times (0,1))) \arrow{r}&  K_0(C^*(\Tau M)) \arrow{r}{\mathrm{ev}_{t=0}}&  K_0(C^*(\A))  \arrow{r} & \cdots
\end{tikzcd}
\end{equation}
\begin{equation}\label{cd:thomlikepi}
\begin{tikzcd}
 \cdots \arrow{r} & K_0(C^*(\presuper{\pi}\G |_{\partial V}\times (0,1)) \arrow{r}\arrow[d, "\M"]  &  K_0(C^*(\Tau V)) \arrow{r}{\mathrm{ev}_{t=0}}\arrow[d, "\pi^{nc}_{!}"]  & K_0(C^*(\presuper{\pi}\A)) \arrow[d, "\pi^{0}_{!}"] \arrow{r} & \cdots \\
 \cdots \arrow{r} & K_0(C^*(\G|_{\partial M}\times (0,1))) \arrow{r}&  K_0(C^*(\Tau M)) \arrow{r}{\mathrm{ev}_{t=0}}&  K_0(C^*(\A))  \arrow{r} & \cdots
\end{tikzcd}
\end{equation}
\begin{equation}\label{cd:thomlike3}
 \xymatrix{
 K_0(C^*(\Tau \Sigma))   \ar[r]^{\varphi_!^{nc}} &K_0(C^*(\Tau M))  \\
 \K_0(C^*(\Tau V)) \ar[u]_{i_*}\ar[r]^{\pi_!^0} & K_0(C^*(\Tau M))   \ar[u]^{\Id} 
}
\end{equation}
\begin{equation}\label{cd:thomlike2}
 \xymatrix{
 K_0(C^*(\Aphi))   \ar[r]^{\varphi_!^{0}} &K_0(C^*(\A))  \\
 \K_0(C^*(\presuper{\pi}\A)) \ar[u]_{i_*}\ar[r]^{\pi_!^0} & K_0(C^*(\A))   \ar[u]^{\Id} 
}
\end{equation}
Since  $\L_{\pi}^{0}$ is the Thom groupoid of the complex bundle  $\presuper{\pi}\A\to A $, we get that $\pi^{0}_{!}$ is the inverse of the Thom isomorphism of this complex bundle  \cite{DLN}. By Diagram \eqref{cd:thomlikepi} and the five lemma, $\pi_!^{nc}$ is an isomorphism too. Now  Diagrams  \eqref{cd:thomlike2} and  \eqref{cd:thomlike3} give: 
\begin{equation}
 \varphi_!^{\bullet} \circ \big( i_*  \circ (\pi_!^{\bullet})^{-1}) = \Id ,  \quad \text{ with } \bullet =nc  \text{ or }  0.  
\end{equation}
This proves that  $\varphi_!^{0}$ and  $\varphi_!^{nc}$ are surjective and gives explicit sections. 
%
\end{proof}

Finally, we denote by $\varphi_* : K(\mu_{\partial \Sigma})\longrightarrow K(\mu_{\partial M})$ the map induced by  $ \varphi^{\mathrm{nc}}_{!} $, through  the isomorphism of Theorem \ref{Thm:Fh-F}. Note that the homomorphisms $q_{\partial\Sigma}$ and $q_{\partial M}$ are then exchanged with the homomorphisms $\mathrm{ev}_{t=0}$.

\section{Geometric $K$-homology}
\label{Kgeo}

\subsection*{Geometric cycles}
Let $(M,\G)$ be an integrated Lie manifold.  

\begin{Def}\label{Def:geocycles}
 An even (odd) geometric cycle over $(M,\G)$ is a 4-tuple $x=(\Sigma, \varphi, E, B)$ consisting of 
\begin{enumerate}[(1)]
 \item an even (odd) dimensional compact manifold with corners $M$; 
 \item a tame surjective submersion  $\varphi: \Sigma\longrightarrow M$;  
 \item an even (odd) Dirac bundle $(E,D)$ on  $(\Sigma,\Gphi)$;
\item a self-adjoint even (odd) Dirac $\partial \Sigma$-taming  $B$ of $D$.
\end{enumerate}
The set of geometric cycles of parity $j$ is denoted by $\EVgeo_j(M)$. 
\end{Def}
 
 If the geometric cycle $x=(\Sigma, \varphi, E, B)$ is even, we get a class 
 $$[B]_{0}:=[B_{+}]_{\partial \Sigma,\ev}\in  K_0(C^*(\Tau \Sigma))$$
  and if it is odd, we get a class 
  $$[B]_{1}:= [B]_{\partial \Sigma,\odd}\in KK_{1}(\Cc,C^*(\Tau \Sigma))\simeq K_1(C^*(\Tau \Sigma)).$$
  Applying the pushforward map $\varphi_{!}^{\nc}$, we get classes in $K_\ast(C^*(\Tau M))$ as well.
  \begin{Def}\label{Def:iso-geocycles} Let $x=(\Sigma, \varphi, E, B)$ be a geometric cycle of parity $j$.  A geometric cycle $x'=(\Sigma', \varphi', E', B')$ is isomorphic to $x$ if there exists a
 diffeomorphism $\kappa :\Sigma\to \Sigma'$ such  that 
 \begin{enumerate}
  \item $\varphi'\circ \kappa=\varphi$ and  $(E,D)\simeq \kappa^* (E',D')$ is an isomorphism of  Dirac bundles.
  \item $[B]_{j}=\kappa^*[B']_{j}\in K_{j}(C^{*}(\Tau\Sigma))$.
 \end{enumerate}  
 \end{Def}

\begin{Def}\label{Def:union-opposite-geocycle}\ 
 \begin{enumerate}
  \item The disjoint union of $x_i=(\Sigma_i, \varphi_i, E_i, B_i)\in \EVgeo_*(M)$, $i=0,1$, is  
  \begin{equation}
    x_0\cup x_1 = \left( \Sigma_0 \cup \Sigma_1, \varphi_0 \cup \varphi_1, E_0 \cup E_1, B_0\cup B_1\right)\in \EVgeo_*(M).
  \end{equation}
  \item The direct sum of $x_i=(\Sigma, \varphi, E_i, B_i)\in \EVgeo_*(X)$, $i=0,1$, is  
  \begin{equation}
    x_0\oplus x_1 = \left( \Sigma, \varphi, E_0 \oplus E_1, B_0\oplus B_1\right)\in \EVgeo_*(M).
  \end{equation}
 \item If $x=(\Sigma, \varphi, E, B)\in \EVgeo_i(M)$ then the opposite cycle is
 \begin{equation}
   -x=(\Sigma, \varphi, E^{op}, B^{op}).
 \end{equation}
Here $E^{op}$ is the same bundle, with the opposite grading in the even case, and $B^{op}=-B$. We have $[B^{op}]_{j}=-[B]_{j}$  for $j=0,1$.
   \end{enumerate}
\end{Def}

\subsection*{Cobordisms}
 To define cobordism of geometric cycles, we introduce:
\begin{Def}\label{Def:cobordism-over-mwc}
A  cobordism  over $M$ is a triple  $(W,Y,\Phi)$ such that 
\begin{enumerate}
 \item $\Phi: W\longrightarrow M$ is a surjective submersion,
 \item $Y=H_1\cup\cdots\cup H_k$ is a union of boundary faces  of $W$ called relative boundary faces, the  remaining boundary faces being called absolute and absolute faces are assumed to be pairwise disjoint,
 \item $\Phi^{-1}(\partial M)=Y$ and  for all relative boundary faces $H$,  $\Phi(H)\in \F_1(M)$,
 \item If  $H\in\F_1(W)$  is absolute, then $\Phi|_H : H\to M$ is tame.
\end{enumerate}
\end{Def}
The submersion $\Phi: W\to M$ is no longer tame. As a consequence, if $(M,\G)$ is an integrated Lie manifold,  the absolute faces  of $W$ are not saturated for the pullback groupoid $\presuper{\Phi}\G$. We will replace the latter by \cite{M,DS}:
\begin{equation}
  \presuper{\Phi_b}\G \rightrightarrows W
\end{equation}
defined by blowing up successively all the absolute faces $H_0,\ldots, H_\ell$ of $W$. Firstly, set:
\begin{equation}
 \G_0=\presuper{\Phi}\G, \qquad \beta_0=\Id,\qquad \H_0=\presuper{\Phi|_{H_{0}}}\G
\end{equation}
and then for any $i\ge 0$: 
\begin{equation}
 \G_{i+1}=\mathrm{\Sblup}_{r,s}(\G_i;\H_i) \overset{\beta_{i+1}}{\longrightarrow} \G_i, \quad \H_{i+1}=(\beta_1\circ\ldots\beta_{i+1})^{-1}(
 \presuper{\Phi|_{H_{i+1}}}\G(M) )
\end{equation}
The required groupoid is $ \presuper{\Phi_b}\G =\G_\ell$ and it does not depend on the order of blowups for the initial submanifolds $ \presuper{\Phi|_{H_i}}\G(M)$ are pairwise disjoint. This is true for any $i$ for the blowup manifolds $\mathrm{\Sblup}(\G_i;\H_i)=[\G_{i},\H_{i}]$ by \cite{MelroseMWC}, and since the structural maps coincide over the open dense subset $(W\setminus \partial W)^{2}$, the result follows. We now have by construction: 
\begin{enumerate}
\item  all faces $H\in\F_1(W)$ are saturated for $\presuper{\Phi_b}\G$.
 \item For any absolute $H\in\F_1(W)$, we consider an open neighborhood $H\subset \U\simeq H\times [0,+\infty)$ and then we have by  \cite[Paragraph 5.3.5]{DS} (after rearranging if necessary the order of blowups in order to have $H$ at the end):
 \begin{align}\label{eq:collar-neigh-bordism}
 (\presuper{\Phi_b}\G)_{\U}& \simeq \big(\mathrm{\Sblup}_{r,s}(W^{2},H^{2})\times_{W^{2}} \G_{\ell-1}\big)\vert_{U} \nonumber \\
  & \simeq (\mathrm{\Sblup}_{r,s}(W^{2},H^{2})\vert_{\U} \, \times_{\U^{2}}\,  \presuper{\Phi\vert_\U}(\G) \nonumber\\
  & \simeq  (H^{2}\times [0,+\infty)\rtimes \Rr) \, \times_{\U^{2}}\,  \presuper{\Phi\vert_\U}(\G), \text{ using  }   \U\simeq H\times [0,+\infty).
 \end{align}
\end{enumerate}
In particular 
\begin{equation}\label{eq:collar-bordism}
 (\presuper{\Phi_b}\G)_{H} \simeq  \Rr \times \presuper{\Phi\vert_H}(\G)
\end{equation}
The last isomorphism will be called a boundary decomposition of $\presuper{\Phi_b}\G$ and the choice of collar diffeomorphism $ \U\simeq H\times [0,+\infty)$  (coming from the choice of a defining function)   for the absolute faces will be considered as part of the data in the sequel. The identifications above provide an isomorphism 
\[
  (\presuper{\Phi_b}\A)_{\U} \simeq   \rho^{*}(\Rr \times \presuper{\Phi\vert_H}(\A)),
\]
where $\rho :\U\to H$ corresponds to the first projection through the collar diffeomorphism.  A Dirac bundle $(E,D)$  over $(W,\presuper{\Phi_b}\G)$ is locally of product type, if it satisfies \cite[Definition 3.4]{B2009}, with $TH\times TN$ replaced here by $T[0,1) \times \presuper{\Phi\vert_H}(\A)$ and using the isomorphism above. In such a case, we can apply the boundary reduction of \cite[Paragraph 3.2]{B2009}, replacing  $H$ and $TN$ there respectively by $[0,+\infty)$ and $ \presuper{\Phi\vert_H}(\A)$ here, and we get a Dirac bundle (or rather an isomorphism class of Dirac bundles) over $(H, \presuper{\Phi\vert_H}(\G))$ of the opposite parity denoted by $(E_{H},D_{H})$ and called the boundary reduction of $(E,D)$ to the absolute face $H$.

\begin{Def}\label{Def:bordism}\ 
A  (even, odd) cobordism over $(M,\G)$ is a 5-tuple $w=(W, Y, \Phi, E, B)$ where:
\begin{enumerate}
 \item $(W,Y,\Phi)$ is a (even, odd dimensional) manifold cobordism  over $M$;
 \item A (even, odd) Dirac bundle $(E,D)$ on $(W,\presuper{\Phi_b}\G)$ is given; 
 \item a self-adjoint (even, odd)  Dirac $Y$-taming $B$ of $D$.
\end{enumerate}
A null-cobordism is a cobordism with exactly one absolute face. 
\end{Def}
If $j$ is the parity of the cobordism, we get a class $[B]_{j}\in K_j(C^*(\presuper{\Phi_b}\G^\F_Y)))\simeq K_{j}(C^{*}(\Tau_{Y}W))$.
 
Let  $w=(W, Y, \Phi, E, B)$ be an odd cobordism over  $(M,\G)$, let $\varphi :\Sigma \to M$ where $\Sigma$ is an absolute face of $W$. Denote by $D$ the underlying Dirac operator of $w$ and pick up a  boundary reduction $(E_{\Sigma},D_{\Sigma})$ of $(E,D)$. We consider the commutative diagram:
\begin{equation}\label{cd:cobord-2-boundary}
\begin{tikzcd}
  K_1(C^*(\Tau_{Y}W)) \arrow{r}{\rho_{\Sigma}}\arrow{d}{\ev_{t=0}} & K_1(C^*(\Rr\times \Tau \Sigma)) \arrow{r}{\text{Bott}}\arrow{d}{\ev_{t=0}}  & K_0(C^*(\Tau \Sigma)) \arrow{d}{\ev_{t=0}}  \\
 K^1(\presuper{\Phi_{b}}\A) \arrow{r}{\rho_{\Sigma}}&    K^{1}(\Rr\times \Aphi) \arrow{r}{\text{Bott}}&  K^{0}_{c}(\Aphi) 
\end{tikzcd}
\end{equation}
where $\rho_{\Sigma}$ is the  composition of the natural restriction homomorphism 
\[
  C^{*}(\Tau_{Y}W) \longrightarrow C^{*}( (\presuper{\Phi_{b}}\G)^{\F}\vert_{\Sigma})
\]
with the isomorphism  induced by \eqref{eq:collar-bordism} at the level of adiabatic and Fredholm groupoids: 
\[
 C^{*}( (\presuper{\Phi_{b}}\G)^{\F}\vert_{\Sigma})\simeq  C^{*}( \Rr\times (\presuper{\varphi}\G)^{\F})
\]
and then with the morphism $\ev_{zt=0} : C^{*}(\presuper{\varphi}\G)^{\F})\to C^{*}(\Tau \Sigma)$ (that is, the one which  gives   the $K$-equivalence
$
C^{*}( \Rr\times (\presuper{\varphi}\G)^{\F})\overset{K}{\sim} C^{*}( \Rr\times \Tau \Sigma)
$).
It follows immediately from the diagram \eqref{cd:cobord-2-boundary} that 
\begin{equation}
 \ev_{t=0}\big(\mathrm{Bott}\circ\ev_{zt=0}\circ \rho_{\Sigma}([B]_{Y,\odd})\big) = [D_{\Sigma}]_{\pr,\ev}.
\end{equation}
 In particular, $D_{\Sigma}$ is $\partial \Sigma$-tameable and we can pick up a taming $B_{\Sigma}$ using Theorem \ref{Thm:26} such that:
  \begin{equation}\label{eq:cobord-2-boundary-unicity}
     [B_{\Sigma}]_{\partial\Sigma,\ev}=\mathrm{Bott}\circ\rho_{\Sigma}([B]_{Y,\odd}). 
   \end{equation}
 We get an even geometric cycle
 \begin{equation}\label{eq:cobord-2-boundary-def}
     \big(\Sigma, \varphi, E_{\Sigma}, B_{\Sigma} \big) \in  \EVgeo_{0}(M).
 \end{equation}
 Picking up a different representative $(E'_{\Sigma},D'_{\Sigma})$ and a different taming $B'_{\Sigma}$ satisfying \eqref{eq:cobord-2-boundary-unicity} provides isomorphic geometric cycles.
 \begin{Def}\label{Def:cd:cobord-2-boundary}
 With the notation above, the isomorphism class of \eqref{eq:cobord-2-boundary-def} is called the boundary reduction of $w$ to $\Sigma$. 
 \end{Def}
The case of the other parity  is similar.

\begin{Def}\label{Def:bordism-geocycle}\ \\
$\bullet$  Two geometric cycles $x_i=(\Sigma_i, \varphi_i, E_i, B_i)$ over $(M,\G)$, $i=0,1$,  are cobordant if there exists a cobordism  $w=(W, Y, \Phi, E, B)$ such that
\begin{enumerate}
 \item The absolute faces of $W$ are exactly $\Sigma_0$ and $\Sigma_1$;
 \item $\Phi|_{\Sigma_i}=\varphi_i$ 
 \item The boundary reduction of $w$ with respect to $\Sigma_{i}$ is the isomorphism class of $(-1)^{i}x_{i}$. 
 \end{enumerate}

$\bullet$ A geometric cycle $x=(\Sigma, \varphi, E, \Psi)$ over $(M,\G)$ is null-cobordant if there exists a null-cobordism with $\Sigma$ as unique absolute face. 
\end{Def}

\begin{Ex}\label{Ex:easy-cobordisms}\ 
 \begin{enumerate}
  \item  Let $x_{i}=(\Sigma, \varphi, E, B_{i})\in \EVgeo_{\ast}(M)$, $i=0,1$, be two even cycles connected by a \emph{tame homotopy}. This means 
  that there exists a $C^{\infty}$ homotopy of Dirac bundles  $(E,D_{t})$ (i.e., $C^{\infty}$ homotopies of  Clifford homomorphisms and of connections) and a family $(B_{t})_{t\in[0,1]}$ of tamings of $(D_{t})_{t\in[0,1]}$  connecting $B_{0}$ to $B_{1}$ and such that:
 \[
 \sigma_{\partial\Sigma}(\Lambda^{-1}B_{t})\in \Sigma_{\partial \Sigma}(E)^{\times}
 \]
 is a continuous path. In particular: $[B_{0,+}]_{\partial \Sigma,\ev}=[B_{1,+}]_{\partial \Sigma,\ev}\in \K_{0}(C^{*}(\Tau\Sigma))$.
 The cycles $x_{0}$ and $x_{1}$  are then  cobordant. A cobordism  is given by 
  \[
    w=\left(\Sigma\times[0,1]_t,  Y,\varphi', E', B'\right)
  \]
  where:
  \begin{enumerate}
 \item $Y= \partial\Sigma \times[0,1]$, $\varphi'=\pr_{1}\circ\varphi$ and  $E'=\pr_{1}^{*}E\otimes \Cc$.
  \item  $(E',D')$ is  the product of $(E,D_{t})$, with $([0,1]\times \Cc, it(1-t)\frac{\partial}{\partial t})$. Thus: 
   \begin{equation}
     D' = D_{t} + it(1-t)\frac{\partial}{\partial t}.
  \end{equation}
    \item The absolute faces are $\Sigma_0=\Sigma\times\{0\}$ and $\Sigma_1=\Sigma\times\{1\}$. 
    \item We then consider the commutative diagram:
    \begin{equation}\label{cd:taming-self-corbordism}
 \xymatrix{
  K_1(C^*(\Tau_{Y} (\Sigma\times [0,1])))   \ar[r]^{\tau}\ar[d]^{\ev_{t=0}} &K^{1}(\Aphi\times \presuper{b}T[0,1])\ar[d]^{\ev_{t=0}}   \\
 K_1(C^*((\Tau\Sigma)\times\Rr))   \ar[r]^{\tau} &K^{1}(\Aphi\times\Rr)   \\
 K_0(C^*(\Tau\Sigma))\ar[u]^{\beta}    \ar[r]^{\tau} &K^{0}(\Aphi)\ar[u]^{\beta} 
}
\end{equation}
The vertical arrows are isomorphisms. Since $\ev_{t=0}[D']_{\pr,\odd}= [D_{0,+}]_{\pr,\ev}\otimes \beta$ and $ [D_{0,+}]_{\pr,\ev}$ belongs to $\im \tau_{*}$, we get that $[D']_{\pr,\odd}$ belongs to $\im \tau_{*}$ too and we apply Theorem \ref{Thm:26} to choose a $Y$-taming $B'$ of $D'$ such that  $\ev_{t=0} [B']_{Y,\odd}=   [B_{0,+}]_{\partial \Sigma,\ev}\otimes \beta$. 
\end{enumerate}
 
  \item For any $x\in \EVgeo(M)$, the geometric cycle $x\cup(-x)$ is null-cobordant. A null-cobordism  is given by 
  the previous cobordism in which $\Sigma\times\{0,1\}$ is considered as the unique absolute face. 

  \item Let $x=(\Sigma, \varphi, E, B)\in \EVgeo_0(M)$ with underlying Dirac operator denoted by $D$.
Let $(E',D')$ be the product of $(E,D)$ with the spin Dirac bundle $(E_{2},D_{2})$ of $ \mathbb{D}^2$.  Let $(E_{1},D_{1})$ be the boundary reduction of $(E_{2},D_{2})$, that is the spin Dirac bundle associated with the spin structure of $ \mathbb{S}^1$ that bounds the one of $(E_{2},D_{2})$ of $ \mathbb{D}^2$. Let also $(E'',D'')$ be the product of $(E,D)$ with $(E_{1},D_{1})$. 
Consider the commutative diagram:
\begin{equation}\label{cd:taming-null-corbordism}
 \xymatrix{
  K_0(C^*(\Tau_{(\partial \Sigma)\times\mathbb{D}^2} (\Sigma\times\mathbb{D}^2)))   \ar[r]^-{\tau}\ar[d]^{\rho} &K^{0}(\Aphi\times \presuper{b}T\mathbb{D}^2)\ar[d]^{\rho}   \\
 K_0(C^*(\Tau_{\partial \Sigma\times\mathbb{S}^1} (\Sigma\times\mathbb{S}^1)\times\Rr))   \ar[r]^-{\tau} &K^{0}(\Aphi\times T\mathbb{S}^1\times\Rr)   \\
 K_1(C^*(\Tau_{\partial \Sigma\times\mathbb{S}^1} (\Sigma\times\mathbb{S}^1)))  \ar[u]^{S}    \ar[r]^-{\tau} &K^{1}(\Aphi\times T\mathbb{S}^1)\ar[u]^{S} \\
K_0(C^*(\Tau_{\partial \Sigma} (\Sigma)))  \ar[u]^{\otimes D_{1}}    \ar[r]^{\tau} &K^{0}(\Aphi)\ar[u]^{\otimes D_{1}} 
}
\end{equation}
The map $\rho$ corresponds to the restriction to the boundary of $\mathbb{D}^2$ and $S$ is the suspension isomorphism. All the vertical arrows are isomorphisms.
Using the lower part of the diagram we first see that there exists a $\partial \Sigma\times \mathbb{S}^1$-taming  $B''$ of $D''$ such that 
 \[
    [B'']_{\partial \Sigma\times\mathbb{S}^1,\odd} = [B]_{\partial \Sigma,\ev}\otimes [D_{1}]_{\odd}.
 \]
  We then set: 
  \[ 
   0_x=\left(\Sigma\times \mathbb{S}^1, \mathrm{pr}_1\circ\varphi, E'', B''\right)\in \EVgeo_{1}(M). 
  \]
  Now using the upper part of the diagram, we obtain a $\partial \Sigma\times \mathbb{D}^2$-taming $B'$ of $D'$ such that 
  \[
    \rho[B']_{(\partial \Sigma)\times\mathbb{D}^2,\ev} = S[B'']_{\partial \Sigma\times\mathbb{S}^1,\odd}.
  \]
  This provides a null-cobordism for $0_{x}$ as follows:
  \begin{equation}
    w=\left(\Sigma\times \mathbb{D}^2,  (\partial\Sigma) \times \mathbb{D}^2,\mathrm{pr}_1\circ\varphi,E', B'\right).
  \end{equation}
  Starting with an odd $x$, we get a null-cobordant cycle $0_{x}$ in the same way. 
 \item Combining the previous constructions, we see that for any $x,y\in \EVgeo(M)$, the geometric cycles $y$ and $y\cup 0_x$ are cobordant.  
\end{enumerate}
\end{Ex}

\subsection*{Vector bundle modification}

Let  $x=(\Sigma, \varphi, E, B)$  be an even geometric cycle over $(M,\G)$ and  $\pi: V\to \Sigma$ an even rank $\Spinc$-vector bundle. Consider the sphere bundle  $Z= S(V \oplus 1_{\Rr})$  of $ \mathrm{pr}_1\circ \pi : V\oplus \Rr\to \Sigma$. The total space $Z$ is a compact manifold with corners and the projection  $\pi_m : Z\to \Sigma$  is  tame, as well as $\varphi_m= \varphi\circ \pi_m : Z \longrightarrow M$. 
Note that 
\begin{align}
  \presuper{\varphi_m}(\G)= \presuper{\pi_m}(\presuper{\varphi}\G) \quad\text{ and } \quad  \presuper{\varphi_m}\A  \simeq \pi_m^*(\presuper{\varphi}\A) \oplus VZ 
\end{align}
where $VZ=\ker d\pi_m$. The $\Spinc$-structure of $V$ induces a $\Spinc$-structure on the bundle $VZ\to \Sigma$ and therefore on the bundle 
\begin{equation}
   \A_{Z}:=\presuper{\varphi_m}\A  \longrightarrow \presuper{\varphi}\A=: \A_{\Sigma}.
\end{equation}
Using $\L_{\pi_m}$, we get a commutative diagram:
\begin{equation}\label{cd:thom}
\begin{tikzcd}
   K_0(C^*(\G(Z)^{\mathrm{ad}})) \arrow{r}{\mathrm{ev}_{t=0}}\arrow[d, "(\pi_m)^{\mathrm{ad}}_!"]  & K_0(C^*(\A_Z)) \arrow[d, "(\pi_m)^{0}_!"]  \\
 K_0(C^*(\G(\Sigma)^{\mathrm{ad}})) \arrow{r}{\mathrm{ev}_{t=0}}&  K_0(C^*(\A_\Sigma))  
\end{tikzcd}
\end{equation}  
Here $\G(Z)= \presuper{\varphi_m}(\G)$ and $\G(\Sigma)=\presuper{\varphi}\G$. The horizontal maps are isomorphisms. Proceeding as in \cite{DLN}, we prove that   $(\pi_m)^{0}_!$ is the Thom isomorphism of the $\Spinc$-bundle $\A_Z  \longrightarrow \A_\Sigma $.  Then  $(\pi_m)_!^{\mathrm{ad}}$ is an isomorphism too.


Let $D_\Sigma$ be the Dirac operator of the cycle $x$, let $S_{Z}$ be the complex spinor bundle  of $VZ$ and set $E_{Z}=E\otimes S_{Z}$. By \cite[Propositions 3.6 and 3.11]{BHS} we can choose a  Dirac bundle  $(E_{Z},D_{Z})$ such that 
\[
(\pi_m)^{0}_![D_Z)]_{\pr,\ev}=[D_\Sigma]_{\pr,\ev}. 
\]
We then deduce from the previous diagram that
\[
 (\pi_m)^{\mathrm{ad}}_!([D_Z]_{\ad,\ev})=[D_\Sigma]_{\ad,\ev}\in K_0(C^*(\G(\Sigma)^{\mathrm{ad}})).
\]
Now, since $D_\Sigma$ is tameable, we obtain by using the commutative diagram: 
{\small \begin{equation}\label{cd:bound}
\begin{tikzcd}
 K_1(C^*(\G(Z)|_{\partial Z})) \arrow{r}{\partial}\arrow[d, "\M", "\simeq" ']  &  K_0(C^*(\G(Z)^\F)) \arrow{r}\arrow[d, "(\pi_m)^\F_!", "\simeq" ']  & K_0(C^*(\G(Z)^{\mathrm{ad}}))   \arrow{r}{\mathrm{ev}_{\partial Z\times 1}} \arrow[d, "(\pi_m)^{\mathrm{ad}}_!", "\simeq" ']  & K_0(C^*(\G(Z)|_{\partial Z})) \arrow[d, "\M", "\simeq" ']  \\
 K_1(C^*(\G(\Sigma)|_{\partial \Sigma})) \arrow{r}{\partial}&  K_0(C^*(\G(\Sigma)^\F)) \arrow{r}&  K_0(C^*(\G(\Sigma)^{\mathrm{ad}})) \arrow{r}{\mathrm{ev}_{\partial \Sigma\times 1}} & K_0(C^*(\G(\Sigma)|_{\partial \Sigma}))  .
\end{tikzcd}
\end{equation}} 
that $D_Z$ is $\partial Z$-tameable too and we pick up a $\partial Z$-taming $B_{Z}$  such that
\[
(\pi_m)^{\F}_![B_Z]=[B]
\] 
The constructions are similar for odd geometric cycles.

\begin{Def}\label{Def:vb-modif-geocycle}\ 
 A vector bundle modification of a geometric cycle $x=(\Sigma, \varphi, E, B)$ over $(M,\G)$ by an even rank $\Spinc$-vector bundle $\pi: V\to \Sigma$  is a  geometric cycle $m(x,V)=(Z,\varphi_m, E_m, B_Z)$ over $\G(M)$ such that
\begin{enumerate} 
 \item  $Z= S(V \oplus 1_{\Rr})$ is the sphere bundle;
 \item $\varphi_m= \varphi\circ \pi_m : Z \longrightarrow M$;
 \item $\pi_!^{\nc}[B_Z]_{\partial Z,\ast}=[B]_{\partial \Sigma,\ast}\in K_{\ast}(C^{*}\Tau\Sigma))$. 
\end{enumerate}
\end{Def}

\subsection*{Geometric $K$-homology}

\begin{Def}
The equivalence relation $\sim$ on $ \EVgeo_*(M)$ is the one generated by the following operations:
\begin{enumerate}[(i)]
 \item Isomorphisms of geometric cycles;
 \item Direct sums: if  $x_i=(\Sigma, \varphi, E_i, B_i)$, $i=1,2$ are geometric cycles  then 
  \begin{equation}
   x_1\cup x_2 \sim (\Sigma, \varphi, E_1\oplus E_2, B_1\oplus B_2);
  \end{equation}
 \item Cobordisms; 
 \item Vector bundle modifications.
 \end{enumerate}
The quotient set 
\[
\KVgeo_*(M) :=  \EVgeo_*(M)  / \sim 
\]
is called geometric $K$-homology of $(M,\G)$. 
 \label{Def:Kgeo}
\end{Def}
 
\begin{Thm}
The following formula: 
\begin{equation}\label{eq:add-Kgeo}
  \forall x_0,x_1\in \EVgeo(M),\qquad [x_0]+[x_1] = [x_0\cup x_1].
\end{equation}
turns  $(\KVgeo(M),+)$ into an abelian group.  
\label{Thm:Kgeo}
\end{Thm}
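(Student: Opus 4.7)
The plan is to verify the four group axioms for the addition $[x_0] + [x_1] = [x_0 \cup x_1]$ in turn: well-definedness on equivalence classes, associativity and commutativity, existence of an identity, and existence of inverses. Associativity and commutativity are immediate, since disjoint union of manifolds is associative and commutative up to isomorphism and isomorphic geometric cycles represent the same class.

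For well-definedness, I would check that each generating relation of Definition \ref{Def:Kgeo} applied to $x_0$ extends to the same type of relation on $x_0 \cup x_1$, acting trivially on the $x_1$ component. An isomorphism extends by $\Id_{\Sigma_1}$; a direct-sum decomposition involves only bundles over $\Sigma_0$, which lives on a base disjoint from $\Sigma_1$; a cobordism $w_0$ from $x_0$ to $x_0'$ combines with the cylinder (tame-homotopy) cobordism of $x_1$ from Example \ref{Ex:easy-cobordisms} (1) to produce a cobordism from $x_0 \cup x_1$ to $x_0' \cup x_1$. The vector bundle modification case is the most delicate: the direct attempt $m(x_0 \cup x_1, V \cup 0) = m(x_0, V) \cup x_1 \cup (-x_1)$ introduces a spurious copy of $-x_1$. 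I would therefore defer it until inverses have been established, and then absorb the extra $x_1 \cup (-x_1)$ using its null-cobordism.

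For the identity element, and for each parity, I set $0 := [0_x]$ with $0_x$ the null-cobordant cycle of opposite parity constructed in Example \ref{Ex:easy-cobordisms} (3). That this is independent of the auxiliary $x$ follows from Example \ref{Ex:easy-cobordisms} (4) applied with $y = 0_z$: it gives $0_z \sim 0_z \cup 0_x$, and symmetrically $0_x \sim 0_x \cup 0_z$; since $0_z \cup 0_x$ and $0_x \cup 0_z$ are isomorphic cycles, $[0_x] = [0_z]$. The identity axiom $[y] + [0] = [y]$ for every $y$ of the appropriate parity is then exactly the content of Example \ref{Ex:easy-cobordisms} (4).

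For inverses, the natural candidate is $-[x] := [-x]$ using the opposite cycle of Definition \ref{Def:union-opposite-geocycle} (3), and Example \ref{Ex:easy-cobordisms} (2) provides a null-cobordism of $x \cup (-x)$. The crucial and most subtle step is converting a null-cobordism into an equivalence with the identity class; this is the main obstacle in the proof. My strategy is to disjointly adjoin the null-cobordism of $x \cup (-x)$ to the cylinder cobordism of an identity representative $0_z$ of appropriate parity, and to regroup the resulting boundary hypersurfaces into two absolute faces, so that the combined structure becomes a genuine cobordism in the sense of Definition \ref{Def:bordism} between $(x \cup (-x)) \cup 0_z$ and $0_z$ (with consistent sign conventions dictated by the boundary reduction of Definition \ref{Def:cd:cobord-2-boundary}). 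Passing to the quotient and invoking the identity axiom proved above then gives $[x] + [-x] = [0]$, completing the verification of the group axioms.
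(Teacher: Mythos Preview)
Your overall strategy coincides with the paper's: verify well-definedness of $\cup$ against each generating relation, note that associativity and commutativity are immediate, and then extract the neutral element and inverses from Example~\ref{Ex:easy-cobordisms}. The paper's proof is in fact only a few lines long and defers all substance to that example. Your treatment of the neutral element and of inverses is more explicit than the paper's and is correct: in particular, you rightly observe that a null-cobordism of $x\cup(-x)$ is not by itself a cobordism to a representative of the identity class (the empty cycle being excluded by the surjectivity requirement on $\varphi$), and your device of adjoining a cylinder on $0_z$ and regrouping absolute faces is a valid way to manufacture the missing cobordism.

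There is, however, a genuine gap in your vector-bundle-modification step. The object $m(x_0\cup x_1,\,V\cup 0)$ is not a vector bundle modification in the sense of Definition~\ref{Def:vb-modif-geocycle}. Over the $\Sigma_1$ component the sphere bundle of the rank-$0$ summand is the two-sheeted cover $\Sigma_1\sqcup\Sigma_1$, so $(\pi_m)^0_!$ is a transfer map rather than a Thom isomorphism, and one checks that condition~(3), $\pi_!^{\nc}[B_Z]=[B]$, cannot be met unless $[D_1]_{\pr}=0$; the formula you quote therefore has no standing. Even granting it formally, your absorption argument does not close: from $x_0\cup x_1\sim m(x_0,V)\cup x_1\cup(-x_1)$ you would need $a\cup(-x_1)\sim a$ with $a=m(x_0,V)\cup x_1$, and the null-cobordance of $x_1\cup(-x_1)$ does not give this, since the extra piece is $(-x_1)$ alone. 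The paper's one-line assertion that each elementary relation on $x_0$ extends to $x_0\cup x_1$ passes over the same issue without comment. A way to repair it that stays within the available moves is to use a positive-even-rank bundle on both components and the (cobordism-level) comparison of iterated versus direct-sum modifications: one shows $m(m(x_0,V),V')$ and $m(x_0,V\oplus V')$ are cobordant, whence
\[
x_0\cup x_1 \ \sim_{\mathrm{VBM}} \ m(x_0,V\oplus V')\cup m(x_1,V') \ \sim_{\mathrm{cobord}} \ m(m(x_0,V),V')\cup m(x_1,V') \ \sim_{\mathrm{VBM}} \ m(x_0,V)\cup x_1,
\]
the last step being a single VBM of $m(x_0,V)\cup x_1$ by $V'\sqcup V'$.
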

\begin{proof}
If the equivalence $x_{0}\sim x_{0}'$ is given by one of the four elementary operations of Definition \ref{Def:Kgeo} then $x_{0}\cup x_{1}\sim x_{0}'\cup x_{1}$. Therefore \eqref{eq:add-Kgeo} is well defined. Commutativity and associativity of $+$ is obvious. The neutral element is represented by $0_x$ and $-[x] = [-x]$ for any $x\in \EVgeo(M)$: details are provided in Example \ref{Ex:easy-cobordisms}. 
\end{proof}

\subsection*{Comparison map}

\begin{Thm}\label{Thm:lambda}
 The map $(\Sigma,\varphi,E,B)\in \EVgeo_{\ast}(M)\longmapsto \varphi^{\mathrm{nc}}_![B]_{\partial \Sigma, \ast}$ gives rise to a well defined homomorphism:
 \begin{equation}
  \lambda \colon \KVgeo_{\ast}(M) \to K_{\ast}(C^*(\Tau M)).
 \end{equation}
\end{Thm}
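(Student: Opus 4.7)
The strategy is to verify that the map $(\Sigma,\varphi,E,B)\mapsto\varphi^{nc}_!([B]_{\partial\Sigma,\ast})$ is invariant under each of the four generating operations of $\sim$ from Definition \ref{Def:Kgeo}: isomorphism, direct sum, cobordism, and vector bundle modification. Once this is verified, the homomorphism property $\lambda([x_0]+[x_1])=\lambda([x_0])+\lambda([x_1])$ follows from the additivity of $\varphi^{nc}_!$ under the direct sum decomposition $K_{\ast}(C^{\ast}(\Tau(\Sigma_0\cup\Sigma_1)))\simeq K_{\ast}(C^{\ast}(\Tau\Sigma_0))\oplus K_{\ast}(C^{\ast}(\Tau\Sigma_1))$.

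Three of the four invariances are essentially formal consequences of functoriality and additivity of the pushforward. For an isomorphism $\kappa:\Sigma\iso\Sigma'$ of cycles with $\varphi=\varphi'\circ\kappa$ and $\kappa^{\ast}[B']_{j}=[B]_{j}$, functoriality $\varphi^{nc}_!=(\varphi')^{nc}_!\circ\kappa^{nc}_!$ combined with $\kappa^{nc}_!\circ\kappa^{\ast}=\Id$ (valid since $\kappa$ is a diffeomorphism) gives the required equality. For direct sums on a common base, the class on the noncommutative tangent bundle is $[B_1\oplus B_2]_{\ast}=[B_1]_{\ast}+[B_2]_{\ast}$, and additivity of $\varphi^{nc}_!$ settles the matter. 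For a vector bundle modification $m(x,V)=(Z,\varphi_m,E_m,B_Z)$, condition (3) of Definition \ref{Def:vb-modif-geocycle} asserts $(\pi_m)^{nc}_!([B_Z]_{\partial Z,\ast})=[B]_{\partial\Sigma,\ast}$, and functoriality of the pushforward along $\varphi_m=\varphi\circ\pi_m$ yields
\[
\lambda(m(x,V))=(\varphi_m)^{nc}_!([B_Z])=\varphi^{nc}_!\circ(\pi_m)^{nc}_!([B_Z])=\varphi^{nc}_!([B])=\lambda(x).
\]

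The principal obstacle is invariance under cobordism, which in our framework plays the role of cobordism invariance of the analytic index. Given an odd cobordism $w=(W,Y,\Phi,E,B)$ between even cycles $x_0$ and $x_1$, with $\varphi_i:=\Phi|_{\Sigma_i}$, Definition \ref{Def:cd:cobord-2-boundary} gives $[B_{\Sigma_i}]_{\partial\Sigma_i,\ev}=\mathrm{Bott}\circ\rho_{\Sigma_i}([B]_{Y,\odd})$ and imposes that the boundary reduction at $\Sigma_i$ represents the class of $(-1)^{i}x_{i}$. Cobordism invariance therefore reduces to establishing the vanishing
\[
(\varphi_0)^{nc}_!\bigl(\mathrm{Bott}\circ\rho_{\Sigma_0}([B]_{Y,\odd})\bigr) + (\varphi_1)^{nc}_!\bigl(\mathrm{Bott}\circ\rho_{\Sigma_1}([B]_{Y,\odd})\bigr) = 0 \in K_0(C^{\ast}(\Tau M)).
\]
The plan is to extract this from the blown-up pullback groupoid $\presuper{\Phi_b}\G$, promoted to a pushforward homomorphism $\Phi^{nc}_!:K_1(C^{\ast}(\Tau_Y W))\to K_0(C^{\ast}(\Tau M))$ in direct analogy with the construction of $\varphi^{nc}_!$ via the deformation groupoid $\L_\varphi$ of Section \ref{tameDirac}. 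The local identification $(\presuper{\Phi_b}\G)|_{\Sigma_i}\simeq\Rr\times\presuper{\varphi_i}\G$ from \eqref{eq:collar-bordism}, together with the diagram \eqref{cd:cobord-2-boundary}, should factor each boundary contribution through the cobordism pushforward $\Phi^{nc}_!([B]_{Y,\odd})$; the cancellation then comes from the long exact sequence relating the noncommutative tangent bundle of $W$ to those of its absolute faces, in which the sum of the two restriction maps coincides with the connecting homomorphism and hence vanishes on classes extending over $W$. The construction of $\Phi^{nc}_!$ and the verification of this factorization, running through the Fredholm, adiabatic, and noncommutative tangent variants of $\presuper{\Phi_b}\G$ analogous to \eqref{eq:thomlike-Fredholm}--\eqref{eq:thomlike-A}, constitute the main technical work of the proof.
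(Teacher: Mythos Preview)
Your treatment of isomorphisms, direct sums, and vector bundle modifications is correct and matches the paper; these are indeed formal consequences of functoriality and additivity of $\varphi^{nc}_!$, and the paper handles them the same way.

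For cobordism invariance, your intuition is right and close to what the paper does, but the mechanism you describe is not quite the one that works, and one of the objects you propose does not exist in the form stated. You suggest constructing a single pushforward $\Phi^{nc}_!:K_1(C^{\ast}(\Tau_Y W))\to K_0(C^{\ast}(\Tau M))$ through which both boundary contributions factor. But $\Phi:W\to M$ is \emph{not} a tame submersion (this is stressed right after Definition~\ref{Def:cobordism-over-mwc}), so the construction of $\L_\varphi$ from \eqref{eq:thomlike} does not apply directly to $\Phi$, and in any case such a map would be degree-shifting, which the $\L$-groupoid construction never produces. The paper does not build such a $\Phi^{nc}_!$.

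What the paper actually does is the following. Writing $\rho_i:C^{\ast}(\Tau_Y W)\to C^{\ast}(\Tau\Sigma_i\times\Rr)$ for the boundary restrictions and $J_i=\ker\rho_i$, $J=J_0\cap J_1$, one has an exact sequence
\[
0\to J\to C^{\ast}(\Tau_Y W)\overset{\rho_0\oplus\rho_1}{\longrightarrow} C^{\ast}(\Tau\Sigma_0\times\Rr)\times C^{\ast}(\Tau\Sigma_1\times\Rr)\to 0.
\]
Its boundary map $\partial$ splits as $\partial_0+\partial_1$ by \cite[Lemma~3.5]{VZ2016}; since $(\rho_0\oplus\rho_1)([B]_{Y,\odd})=([B_0]\otimes\beta,-[B_1]\otimes\beta)$ lies in the image, applying $\partial$ yields $\partial_0([B_0]\otimes\beta)=\partial_1([B_1]\otimes\beta)$ in $K_0(J)$. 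This is the exactness step you had in mind, but note that it takes place in $K_0(J)$, not yet in $K_0(C^{\ast}(\Tau M))$. The remaining work, which your outline does not supply, is to transport this equality down to $K_0(C^{\ast}(\Tau M))$. For that the paper extends the deformation \eqref{eq:thomlike} to a groupoid $\L_w$ over $W\times[0,1]_t\times[0,1]_u$ built from $(\presuper{\Phi_b}\G)^{ad}$ and $\presuper{(\Phi_1)_b}(\G^{ad})$, and checks that in a large commutative diagram the left column realises $(\varphi_0)^{nc}_!\otimes\Id\oplus(\varphi_1)^{nc}_!\otimes\Id$, while the bottom boundary map is addition followed by Bott periodicity. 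The equality in $K_0(J)$ then descends to $(\varphi_0)^{nc}_!([B_0])=(\varphi_1)^{nc}_!([B_1])$. So the missing technical ingredients in your sketch are the boundary-splitting lemma from \cite{VZ2016} and, more substantially, the construction of $\L_w$ and the diagram chase that links the $K$-theory of $J$ to that of $\Tau M$.
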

We emphasize that this result, together the conclusion of Theorem \eqref{Thm:pushforward-respect-indices}, gives information about the invariance of the Fredholm index of tame Dirac operators. For instance,  let us consider even tame Dirac operators $B_j$ associated with integrated Lie manifolds $(\Sigma_j,\G_j)$, $j=1,2$. If $(\Sigma_1,B_1)$ and $(\Sigma_2,B_2)$ can be enhanced into cobordant geometric cycles over some integrated Lie manifold $(M,\G)$, then their Fredholm index coincide: 
$$\ind^\F([B_1]_{\partial \Sigma_1,\ev})= \ind^\F([B_2]_{\partial \Sigma_2,\ev}).$$
This can be viewed as a generalization of the cobordism invariance of Dirac operators on closed manifolds. The comparison homomorphism $\lambda$ provides the geometric $K$-homology with a split surjection to the $K$-group of symbols. But it is not shown here that this map is injective, which it is in the classical Baum-Douglas case. We do not need the injectivity in this article. However, the question under which conditions $\lambda$ is an injection, is to be investigated in future research.

\begin{proof}
We prove the even case, the odd one is similar.

\emph{Invariance under cobordism:} 
 Let $w=(W, Y, \Phi, E, B)$ be a cobordism between two even geometric cycles $x_i=(\Sigma_i, \varphi_i, E_i, B_i)$, $i=0,1$ over $(M,\G)$. We denote by:
 \begin{enumerate}
 \item $\G(W)=\presuper{\Phi_b}\G$ the groupoid associated with the cobordism $w$;
  \item $\G_i=\presuper{\varphi_i}\G$ the groupoid associated with $x_i$, $i=0,1$;
 \item $\rho_{i}:=\rho_{\Sigma_{i}}: C^*(\Tau_YW)\to  C^*(\Tau\Sigma_i\times\Rr)$ the homomorphism defined just after Diagram \eqref{cd:cobord-2-boundary}.
   \end{enumerate}


$\bullet$ Recall that we  have by definition of a cobordism of geometric cycles: 
\[
\rho_i[B]_{Y,\odd}=(-1)^i[B_i]_{\partial\Sigma_{i},\ev}\otimes\beta
\]
where $\beta$ is the chosen Bott generator of $K_1(C^*(\Rr))$.

\medskip $\bullet$ We are going to prove that 
 \begin{equation}\label{eq:push-equal} 
(\varphi_{0})^{\nc}_!([B_0]_{\partial\Sigma_{0},\ev})=(\varphi_{1})^{\nc}_!([B_1]_{\partial\Sigma_{1},\ev})
\end{equation}
 where  the homomorphisms are defined in \eqref{eq:push-forward-nct}.

Denote by $J_i$ the kernel of $\rho_{i} $ and observe that $J_{0}+J_{1}=C^*(\Tau_YW)$. Set $J=J_{0}\cap J_{1}$ and consider the exact sequences:
\begin{equation}\label{eq:1-boundary}
 0\longrightarrow J \longrightarrow J_{i} \overset{\rho_i}{\longrightarrow} C^*(\Tau\Sigma_i\times\Rr) \longrightarrow 0. 
\end{equation}
for $i=0,1$. We denote by $\partial_{i}\in KK_{1}(C^*(\Tau\Sigma_i\times\Rr), J_{i})$ the associated boundary elements.  Consider also:
\begin{equation}\label{eq:2-boundary}
 0\longrightarrow J \longrightarrow C^*(\Tau_YW) \overset{\rho_{0}\oplus \rho_{1}}{\longrightarrow} C^*(\Tau\Sigma_0\times\Rr)\times C^*(\Tau\Sigma_1\times\Rr)\longrightarrow 0
\end{equation}
whose ideal is given by $J=J_{0}\cap J_{1}$ and boundary element denoted by $\partial$.
By  \cite[Lemma 3.5]{VZ2016}) we have:
\begin{equation}\label{eq:splitting-boundary}
 \partial(x_{0},x_{1}) = \partial_0(x_{0})+ \partial_1(x_{1}) \in KK_1(\Cc,J),\quad x_{i}\in KK_{1}(\Cc,C^*(\Tau\Sigma_i\times\Rr)).
\end{equation}

We know that 
\begin{equation}
 \rho_{0}\oplus \rho_{1}([B]_{Y,\odd}) = \left([B_0]_{\partial\Sigma_{0},\ev}\otimes\beta,-[B_1]_{\partial\Sigma_{1},\ev}\otimes\beta \right),
\end{equation}
therefore formula \eqref{eq:splitting-boundary} and exactness imply:
\begin{equation}\label{eq:pre-inv-cobordism}
 \partial_0([B_0]_{\partial\Sigma_{0},\ev}\otimes\beta)= \partial_1([B_1]_{\partial\Sigma_{1},\ev}\otimes\beta) \in K_0(J).
\end{equation}

It is now time to extend the deformation \eqref{eq:thomlike}: 
\begin{equation}\label{eq:thomlike-the-big-one}
 \L_w= (\G(W))^{\mathrm{ad}}\times\{u=0\} \cup \presuper{(\Phi_1)_b}(\G^{\mathrm{ad}}) \times(0,1]_u  \rightrightarrows W\times[0,1]_t \times[0,1]_u.
\end{equation}
Here $\Phi_1 =\Phi\circ \pr_{1}: W\times [0,1]_t\to M$. We can consider various saturated sub-groupoids of $\L_w$. For instance: 
\begin{equation}
 \L_Y^{\nc}(w) = \L_w|_{(W\times\{t=0\}\cup Y\times(0,1)_{t})\times [0,1]_{u}}.
\end{equation}
If we consider in $\L_Y^{\nc}(w)$ the faces corresponding to $\Sigma_i$, we recover the groupoids $\L^\nc_{\varphi_i}\times\Rr$. Continuing in this way and 
denoting by $\Phi' : W\setminus(\Sigma_0\cup\Sigma_1)\to M$ the restriction of $\Phi$ (which is again a surjective submersion), we get the following commutative diagram:
\begin{center}
\begin{equation}\label{cd:bound2}
\begin{tikzcd}[row sep=huge, column sep=huge, text height=1.5ex, text depth=0.25ex]
 K_1(C^*(\Tau \Sigma_0\times\Rr))\oplus K_1(C^*(\Tau \Sigma_1\times\Rr)) \arrow{r}{\partial}&  K_0(J)  \\
  K_1(C^*(\L_{\varphi_0}^\nc\times\Rr))\oplus K_1(C^*(\L_{\varphi_1}^\nc\times\Rr))\arrow[u, "\simeq", "u=0"']  \arrow{d}{u=1}\arrow{r}{\partial'}&  K_0(...)  \arrow[u, "\simeq", "u=0"']\arrow{d}{u=1} \\
 K_1(C^*(\presuper{\varphi_1}(\Tau M\times\Rr)))\oplus K_1(C^*(\presuper{\varphi_1}(\Tau M\times\Rr)))\arrow[d, "\M", "\simeq" '] \arrow{r}{\partial''}&  K_0(C^*(\presuper{\Phi'_1}(\Tau M)))\arrow[d, "\M", "\simeq" '] \\
 K_1(C^*(\Tau M\times\Rr))\oplus K_1(C^*(\Tau M\times\Rr))  \arrow{r}{\partial'''}&  K_0(C^*\Tau M))  
\end{tikzcd}
\end{equation}
\end{center}
The map in the bottom line is given by addition and Bott periodicity, in particular, if $\partial'''(u\oplus v)=0$ then $u=-v$. 

The map obtained from top to bottom in the left column is equal to $(\varphi_0)_!^{\nc}\otimes \Id \oplus (\varphi_1)_!^{\nc}\otimes \Id$. 

Therefore, using the equality \eqref{eq:pre-inv-cobordism} together with the commutativity of the previous diagram and the remarks just above, we conclude that 
the equality \eqref{eq:push-equal} holds true.

\bigskip \emph{Invariance under vector bundle modification:} 

Consider $x = (\Sigma, \varphi, E, \Psi) \in \EVgeo(M)$ and a vector bundle modification $m(x, V) = (Z, \varphi_m, E_m, B_Z)$. 
One one hand, we know that 
\[
  (\pi_m)^{\F}_!([B_Z])= [B].
\]
On the other hand, we have $\varphi_m=\varphi\circ \pi_m$, therefore:
\[
   (\varphi_m)^{\F}_!([B_Z]) = (\varphi\circ \pi_m)^{\F}_! =(\varphi^{\F}_! \circ (\pi_m)^{\F}_!)([B_Z])= \varphi^{\F}_!([B])
\]
and the equality $(\varphi_m)^{\F}_!([B_Z])=\varphi^{\F}_!([B])$ proves the invariance under vector bundle modification. 
\end{proof}

\section{Reduction to Callias-type operators}
\label{RedDirac}

Let $(M,\G)$ be an integrated Lie manifold. We are going to define a  \emph{clutching map}\footnote{We only treat here the clutching map in the even case, the odd case is similar.}:
\begin{equation}
\widetilde{c} \colon \Gamma(\mu_{\partial M}) \to \KVgeo_0(M)
\end{equation}
and show that it descends to  a  \emph{clutching homomorphism} $c\colon  \FEllV(M)\to \KVgeo_{0}(M) $. 

We set $\Sigma_{\A} = S(\A \oplus \Rr)$ for the clutching space of $\A$. We denote by $\varphi$ the tame submersion $\SigmaA\to M$. 
We can write, $\Sigma_{\A} = \B(\A)_{+} \cup \B(\A)_{-}$, where $\B(\A)$ denotes the ball bundle in $\A$ and $\B(\A)_{\pm}$ denote the upper and lower hemispheres respectively. Let $\widehat{\A} = \A \cup S(\A)$ be the radial compactification of $\A$ and $\widehat{\pi} \colon \widehat{\A} \to M$ the corresponding projection map.

Let  $P \in \Psi_{\V}^m(M; E_0, E_1)$ be a  fully
elliptic operator. By ellipticity of $P$ we have an isomorphism
\[
\sigma_{\mathrm{pr}}(P) \colon \pi^{\ast} E_0 \iso \pi^{\ast} E_1. 
\]
The clutched bundle $E_{\sigma} \to \SigmaA$ is defined by the glueing of pullbacks of $E_0$ and $E_1$, along the boundary stratum $S(\A)$, using $\sigma_{\mathrm{pr}}(P)$:
\[
E_{\sigma} = \widehat{\pi}^{\ast} E_0 \cup_{S(\A)} \widehat{\pi}^{\ast} E_1.
\]
To define a geometric cycle $\widetilde{c}(\E_0, \E_1, \sigma_f(\Lambda^{-1} P))$ associated with $(\E_0, \E_1, \sigma_f(\Lambda^{-1} P))\in \Gamma(\mu_{\partial M})$, we consider the Dirac operator $D_{\SigmaA}$ on the integrated Lie manifold $(\SigmaA,\Gphi)$ associated with the  $\Spinc$-structure of  $\Aphi\to \Sigma_\A$, and we first observe:
\begin{Lem}(\cite{BVE})
Let $(M,\G)$ be an integrated Lie manifold and $P\in \Psi_{\G}(E_0,E_1)$ be an elliptic operator. Then: 
\begin{align}
& (i\circ\mathrm{Thom})([P]_{\pr,\ev})=  [D_{\SigmaA}]_{\pr,\ev}\otimes ([E_\sigma]-[\varphi^*E_1])\in K^0(\Aphi). \label{L}
\end{align}
\end{Lem}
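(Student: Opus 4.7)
The plan is to show that both sides of the identity are images of $[P]_{\pr,\ev}$ under Thom homomorphisms that differ only by passage through the open inclusion $i\colon\A\hookrightarrow\SigmaA$ of the upper open hemisphere. The two key inputs are a clutching-type identification of the virtual bundle $[E_\sigma]-[\varphi^{*}E_1]$ and the naturality of the Thom map under open inclusions.

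First I would interpret the right-hand side as a Thom transform. By construction of $D_{\SigmaA}$ from the $\Spinc$-structure on $\Aphi\to\SigmaA$, the operation $[V]\mapsto [D_{\SigmaA}]_{\pr,\ev}\otimes p^{*}[V]$, with $p\colon\Aphi\to\SigmaA$, realizes the Thom homomorphism $\Phi_{\Aphi}\colon K^{0}(\SigmaA)\to K^{0}(\Aphi)$. This is the same mechanism already exploited in Theorem \ref{Thm:Thom-Morita-Kfunctor}, now applied globally on $\SigmaA$ rather than to the restriction $\presuper{\pi}\A\to\A$. Hence the right-hand side of the lemma equals $\Phi_{\Aphi}([E_\sigma]-[\varphi^{*}E_1])$.

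Second I would identify $[E_\sigma]-[\varphi^{*}E_1]$ with the extension by zero of $[P]_{\pr,\ev}$. On the lower hemisphere $\B(\A)_{-}$ both bundles coincide with $\widehat{\pi}^{*}E_1$, so the virtual bundle is literally trivial there; the standard clutching argument produces a relative class in $K^{0}(\B(\A)_{+},S(\A))$, and excision identifies this group with $K^{0}_{c}(\A)\simeq K^{0}_{c}(\A^{*})$. By the defining property of the clutching construction, this identification sends $[E_\sigma]-[\varphi^{*}E_1]$ to $[P]_{\pr,\ev}$, so $[E_\sigma]-[\varphi^{*}E_1]=i_{*}[P]_{\pr,\ev}$ in $K^{0}(\SigmaA)$. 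Using $\Aphi|_{\A}\simeq\presuper{\pi}\A$ and the fact that $D_{\SigmaA}|_{\A}$ is (canonically) the Dirac operator of the $\Spinc$-bundle $\presuper{\pi}\A\to\A$, naturality of the Thom map under open inclusions then gives $\Phi_{\Aphi}\circ i_{*}=i_{*}\circ\mathrm{Thom}$, and applying this to $[P]_{\pr,\ev}$ closes the argument.

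The main obstacle is the second paragraph: verifying that the clutching construction, together with the excision isomorphism $K^{0}(\B(\A)_{+},S(\A))\simeq K^{0}_{c}(\A)$, delivers precisely $[P]_{\pr,\ev}$ with the sign and orientation conventions adopted earlier in the paper, and simultaneously checking that the restriction to $\A$ of the global Thom class of $\Aphi$ agrees, with the same orientation, with the Thom class of $\presuper{\pi}\A$ used in Theorem \ref{Thm:Thom-Morita-Kfunctor}. Once these bookkeeping identifications are pinned down, the chain of equalities is automatic.
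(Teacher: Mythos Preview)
Your outline is correct and is essentially the standard argument that the reference \cite{BVE} carries out (for $\A=TM$). Note, however, that the paper itself does not give an independent proof of this lemma: it simply observes that the computation in \cite{BVE} goes through verbatim once $TM$ is replaced by $\A$. Your proposal therefore supplies more detail than the paper does, and the ingredients you isolate---the identification of $[D_{\SigmaA}]_{\pr,\ev}\otimes(-)$ with the Thom homomorphism for $\Aphi\to\SigmaA$, the clutching/excision identity $[E_\sigma]-[\varphi^{*}E_1]=i_{*}[P]_{\pr,\ev}$, and the naturality $\Phi_{\Aphi}\circ i_{*}=i_{*}\circ\mathrm{Thom}$---are exactly the ones needed. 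The ``obstacle'' you flag (sign and orientation bookkeeping) is real but routine, and it is precisely what the citation to \cite{BVE} is meant to absorb.
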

This lemma is proved in \cite{BVE} in the case $\A=TM$. The proof is the same here. Secondly:
 \begin{Thm}\label{Prop:taming-clutching}
 Let $(M,\G)$ be an integrated Lie manifold and $P\in \Psi_{\G}^{m}(E_0,E_1)$ be a fully elliptic operator. Then there exists an even Dirac bundle $(E,D)$ on $(\Sigma_\A,\Gphi)$ with boundary taming $B$ such that 
 \begin{equation}\label{eq:taming-clutching}
   \varphi_!^{nc}([B]_{\partial \SigmaA,\ev}) = [P]_{\partial M,\ev} \in K_0(C^*(\Tau M)).
 \end{equation}
 In particular,  $B_{+}$ and $P$ have the same Fredholm index. 
\end{Thm}

Let us remark that when $\A$ is Spin${}^c$, the tame Dirac bundle can be obtained on the integrated Lie manifold $(M,\G)$ itself.  Indeed, using the Thom isomorphism: $K^0(M) \simeq K^0_c(\A^{*})$, we see that the principal symbol  of any fully elliptic operator is in the same class as the one of a Dirac bundle $(M,\G)$, and we then can apply Theorem \ref{thm:diracification}. 

\begin{proof}
 Rewriting the proof of Theorem \ref{Thm:Thom-Morita-Kfunctor}, we get a commutative diagram:
\begin{center}
\begin{equation}\label{D0}
\begin{tikzcd}[row sep=huge, column sep=huge, text height=1.5ex, text depth=0.25ex]
K^0_c(\A^{\ast})\arrow{r}{\mathrm{Thom}} & K^0_c({}^\pi\!\A^{\ast}) \arrow{r}{- \otimes [i]} & K^0_c({}^\varphi\!\A^{\ast})\\
K_0(C^{\ast}(\Tau M)) \arrow{u}{\mathrm{ev}_{t=0}} \arrow{r}{(\pi_!^{nc})^{-1}} & K_0(C^{\ast}(\Tau \A)) \arrow{u}{\mathrm{ev}_{t=0}}  \arrow{r}{- \otimes [i]} & K_0(C^*(\Tau\SigmaA))\arrow{u}{\mathrm{ev}_{t=0}}   
\end{tikzcd}
\end{equation}
\end{center}
By the previous lemma and this diagram, we obtain the existence  of a fully elliptic $Q$ on $\Gphi$ such that 
\begin{equation}
 [Q]_{\pr,\ev} = [D_{\SigmaA}]_{\pr,\ev}\otimes ([E_\sigma]-[\varphi^*E_1]) = [D_1]_{\pr,\ev}
\end{equation}
where $(D_1,E_\sigma\oplus \varphi^*E_1)$ is the even Dirac bundle with:
\[
 D_1=  D_{\SigmaA} \otimes E_\sigma \oplus D_{\SigmaA}^{op} \otimes \varphi^*E_1  
\]
and $D_{\SigmaA} \otimes E_\sigma$ is the Dirac operator on $\Sigma$ twisted by $E_\sigma$ and $D_{\SigmaA}^{op}=-D_{\SigmaA} $. 
Now, using Theorem  \ref{Thm:26}, we get an even Dirac bundle $(\widetilde{E},\widetilde{D})$ with boundary taming $B=\widetilde{D}+R$ such that we still have 
\[
 [B]_{\partial \SigmaA,\ev} = [Q]_{\partial \SigmaA,\ev}\in K_{0}(C^*(\Tau \SigmaA)),
\]
from which we conclude that \eqref{eq:taming-clutching} holds true using Theorem \ref{Thm:Thom-Morita-Kfunctor}, (2).
The last assertion comes from Theorem \ref{Thm:pushforward-respect-indices}.
\end{proof}

Now, using Theorem \ref{Prop:taming-clutching} to pick up suitable $(\widetilde{E},\widetilde{D})$ and $B$, we define:
\[
\widetilde{c}(\E_0, \E_1, \sigma_{\partial M}(\Lambda^{-m} P)) = [(\SigmaA, \widetilde{E}, \varphi, B)]_{iso} \in  \KVgeo_0(M). 
\]

The clutching map $\widetilde{c}$ sends a relative cycle to an isomorphism class of geometric cycles (note that two choices of $B$ yield isomorphic cycles). 

\begin{Thm}
The map $\widetilde{c}$ induces a well defined homomorphism
\[
c \colon  \FEllV(M) \to \KVgeo_0(M). 
\]
\label{Prop:clutching}
\end{Thm}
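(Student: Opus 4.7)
The plan is to verify three properties of $\widetilde c$: well-definedness of the image class in $\KVgeo_0(M)$ independently of the auxiliary choices (fully elliptic lift $P$, Dirac bundle $(\widetilde E,\widetilde D)$ and taming $B$) made in Proposition~\ref{Prop:taming-clutching}; descent of $\widetilde c$ to $K(\mu_{\partial M})$ by checking it sends elementary relative cycles to the zero class; and compatibility with direct sums. Together these yield the group homomorphism $c:\FEllV(M)\to\KVgeo_0(M)$.

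For well-definedness, any two valid choices $(\widetilde E_i,\widetilde D_i,B_i)$, $i=0,1$, satisfy $\varphi_!^{\nc}[B_i]_{\partial \SigmaA,\ev}=[P]_{\partial M,\ev}$, so $[B_0]_{\partial \SigmaA,\ev}-[B_1]_{\partial \SigmaA,\ev}$ lies in $\ker \varphi_!^{\nc}$. I would first treat the case of the same underlying Dirac bundle $\widetilde E$: two tamings are then connected by a smooth path of fully elliptic operators produced by a parametrized application of the homotopy-lifting argument used in the proof of Theorem~\ref{Thm:26}, and Example~\ref{Ex:easy-cobordisms}(1) converts this path into a genuine cobordism between the two geometric cycles. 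When the underlying Dirac bundles differ, one first stabilizes by adding $\widetilde D'\oplus(-\widetilde D')$ summands (as in the proof of Theorem~\ref{Thm:26}, after a vector bundle modification if needed); such additions only modify the cycle by an $x\cup(-x)$ term, which is null-cobordant by Example~\ref{Ex:easy-cobordisms}(2). A homotopy between two fully elliptic representatives $P_0,P_1$ of the same $\Gamma(\mu_{\partial M})$-class is handled the same way, and independence from the order reduction $\Lambda$ is routine.

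For descent to $K(\mu_{\partial M})$, an elementary relative cycle $(\E,\E,\alpha)$ with $\alpha$ homotopic to the identity admits a fully elliptic representative $P$ whose principal symbol is homotopic to the identity; the clutched bundle $E_\sigma$ is then trivial and $[P]_{\pr,\ev}=0\in K^0_c(\Aphi)$. One may then arrange the Dirac bundle of the form $\widetilde D\oplus(-\widetilde D)$ and produce a taming $B$ whose class $[B]_{\partial \SigmaA,\ev}$ vanishes, so the resulting geometric cycle is of the form $x\cup(-x)$ and hence null-cobordant by Example~\ref{Ex:easy-cobordisms}(2). For additivity, given representatives $P,P'$ of two relative cycles, I would choose $P\oplus P'$ as a representative of their direct sum, observe that the associated clutched bundle is $E_\sigma\oplus E_\sigma'$, and take the taming $B\oplus B'$; the resulting geometric cycle is the direct sum, which coincides with $\widetilde c([P])\cup \widetilde c([P'])$ in $\KVgeo_0(M)$ by the equivalence relation of Definition~\ref{Def:Kgeo}.

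The main obstacle is the first step: to upgrade the abstract $K$-theoretic equality $\varphi_!^{\nc}[B_0]=\varphi_!^{\nc}[B_1]$ into a genuine cobordism of geometric cycles in the sense of Definition~\ref{Def:bordism}. The delicate point is a parametrized strengthening of the construction in the proof of Theorem~\ref{Thm:26}: one must produce not merely a continuous homotopy of full symbols but a $C^\infty$ path of fully elliptic operators on $\Sigma_\A \times [0,1]$ whose boundary reductions, in the sense of Definition~\ref{Def:cd:cobord-2-boundary}, recover the prescribed cycles $x_0$ and $-x_1$. This compatibility with the boundary-reduction formalism of Definition~\ref{Def:bordism-geocycle} is where the geometric content of the clutching construction concentrates, and it is the only nontrivial obstruction to the remaining steps.
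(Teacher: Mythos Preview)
Your decomposition into well-definedness of auxiliary choices, vanishing on elementary cycles, and additivity is legitimate, but the paper's proof takes a different and more direct route that sidesteps the very obstacle you flag at the end. Rather than trying to promote the abstract equality $\varphi_!^{\nc}[B_0]=\varphi_!^{\nc}[B_1]$ to a cobordism via a parametrized version of Theorem~\ref{Thm:26}, the paper works directly with the relative-cycle data: given a smooth homotopy $p_t$ of fully elliptic symbols between $(P_0)_{rel}$ and $(P_1)_{rel}$ (to which the general equivalence reduces after stabilizing by elementary elements), one clutches \emph{over the cylinder} $\widehat\Sigma=[0,1]\times\Sigma_\A$ using the family $\sigma_t$ to build a single bundle $\widehat E$, then equips $(\widehat\Sigma,\presuper{\widehat\varphi_b}\G)$ with a Dirac operator $\widehat D$ whose boundary reductions at $t=0,1$ are $D_0,D_1$, and finally produces a $Y$-taming $\widehat B$ by the mechanism of Example~\ref{Ex:easy-cobordisms}. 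This yields the cobordism $(\widehat\Sigma,Y,\widehat\varphi,\widetilde E,\widehat B)$ between $\widetilde c((P_0)_{rel})$ and $\widetilde c((P_1)_{rel})$ in one stroke.

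The difference matters. Your plan asks for a path of tamings on the fixed space $\Sigma_\A$, which forces you to confront changes in the underlying Dirac bundle (the clutched bundle $E_\sigma$ moves with $t$) and to control the boundary-reduction formalism pointwise; you correctly identify this as the crux and do not fully resolve it. The paper instead absorbs the entire homotopy into the geometry of the cobordism itself: the family $\sigma_t$ becomes the gluing datum for $\widehat E$ over $\widehat\Sigma$, so the Dirac bundle on the cobordism is built once and its boundary reductions are the desired endpoints by construction. This is what buys you compatibility with Definition~\ref{Def:bordism-geocycle} without any delicate parametrized lifting. Your treatment of elementary cycles and additivity is fine, but the heart of the argument is the cylinder-clutching step, and that is exactly the piece your proposal leaves open.
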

\begin{proof}

Let $(P_0)_{rel} := (\E_0, \F_0,  p_{0})$ and $(P_1)_{rel} := (\E_1, \F_1, p_{1})$
be elements of $\Gamma_0(\mu_M)$ which are equivalent. Let  $\alpha,\beta\in \Gamma_0(\mu_M)$ be elementary elements such that 
\[
(\E_0, \F_0,  p_{0})\oplus \alpha \simeq  (\E_1, \F_1, p_{1})\oplus \beta. 
\] 
Without loss of generality, we can consider that $E=\E_{0}=\E_1$, $F=\F_{0}=\F_1$ and $p_{0}$, $p_{1}$ connected by a smooth homotopy $p_{t}$ of fully elliptic symbols. Denote by $\sigma_{t}: \pi^{*}E \longrightarrow \pi^{*} F$ the map obtained by extending by homogeneity $\sigma_{\pr}(P_{t})$ over $\A\setminus \{0\}$ and multiplying it with a function $\chi \in C^{\infty}(\A)$ such that $\chi(\xi)= 0$ in a neighborhood of the $0$-section and $\chi(\xi)=1$ near infinity.  

Define $\widehat{E}$ via the glueing diffeomorphism
$[0,1] \times S(\A) \times E \to [0,1] \times S(\A) \times F, \ (t, x, v) \mapsto (t, x, \sigma_t(v))$ along the cylinder
$[0,1] \times S(\A)$. This furnishes the vector bundle $\widehat{E}$ over the cylinder clutching space $\widehat{\Sigma} := [0,1]\times \Sigma$. 
Denote by $\widehat{\varphi} : \widehat{\Sigma} \longrightarrow M$ the natural projection. Setting $Y= [0,1]\times \partial \Sigma$, we already get a cobordism 
over $M$, namely $(\widehat{\Sigma},\widehat{\varphi},Y)$. 

Let $D_{j}$ be the Dirac operator underlying the geometric cycle $\widetilde{c}((P_j)_{rel})$. We consider on $\widehat{\Sigma}$ the $\Spinc$-structure that coincides with the one of $\Sigma$ at $\Sigma\times \{0\}$ and to the opposite one at $\Sigma\times \{1\}$. 

Let us consider a Dirac operator $\widehat{D}$ on the integrated Lie manifold $(\widehat{\Sigma}, \presuper{\widehat{\varphi}_{b}}\G)$ whose boundary reductions at $\Sigma\times \{j\}$ coincide with $D_{j}$.  The corresponding Clifford vector bundle over $\widehat{\Sigma}$ is: 
\[
 \widetilde{E} = S_{\widehat{\Sigma}}\otimes \widehat{E} \oplus  (-S_{\widehat{\Sigma}})\otimes \widehat{\varphi}^{*}F. 
\]

Arguing as in Example \ref{Ex:easy-cobordisms}, we obtain a $Y$-taming $\widehat{B}$ such that $(\widehat{\Sigma},\widehat{\varphi},Y, \widetilde{E},\widehat{B})$ is a cobordism between the geometric cycles $\widetilde{c}((P_j)_{rel})$.

\end{proof}

\begin{Thm}
The diagram
\begin{figure}[H]
\begin{tikzcd}[row sep=huge, column sep=huge, text height=1.5ex, text depth=0.25ex]
K_{\ast}(\mu) \arrow{d}{c} \arrow{dr}{\pd} & \\
\KVgeo(M) \arrow{r}{\lambda} & K(C^{\ast}(\Tau M)) 
\end{tikzcd}
\end{figure}
commutes. In particular, if $P \colon C^{\infty}(M, E_0) \to C^{\infty}(M, E_1)$ denotes a fully elliptic pseudodifferential operator on the integrated Lie manifold $(M, \G)$, then there is a taming $\C$ on the integrated Lie manifold $(\SigmaA, \Gphi)$ such that $\ind(\C) = \ind(P)$. 
\label{Thm:redDirac}
\end{Thm}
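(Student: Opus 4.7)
The plan is to observe that the commutativity of the triangle is essentially tautological once the definitions of $c$, $\lambda$, and $\pd$ are unfolded, and then to read off the index statement from Theorem \ref{Thm:pushforward-respect-indices} applied to the pair $(P,B)$ produced by the clutching construction.

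More precisely, fix a fully elliptic $P\in \Psi^{m}_{\G}(E_{0},E_{1})$ with class $[P]_{\partial M}\in \FEllV(M)=K(\mu_{\partial M})$. By the very construction of $c$ (Theorem \ref{Prop:clutching}), combined with Proposition \ref{Prop:taming-clutching}, one has
\[
  c([P]_{\partial M}) = \big[(\SigmaA,\varphi,\widetilde{E},B)\big]\in \KVgeo_{0}(M),
\]
where $B$ is a boundary taming of the twisted Dirac operator on $(\SigmaA,\Gphi)$ satisfying $\varphi_{!}^{nc}([B]_{\partial \SigmaA,\ev})=[P]_{\partial M,\ev}\in K_{0}(C^{*}(\Tau M))$. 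Applying $\lambda$ (Theorem \ref{Thm:lambda}) to this geometric cycle gives, by definition, the class $\varphi_{!}^{nc}([B]_{\partial\SigmaA,\ev})$, which equals $[P]_{\partial M,\ev}$ by the defining identity above. On the other hand, the Poincar\'e duality isomorphism of Theorem \ref{Thm:Fh-F} is precisely the map $[P]_{\partial M}\longmapsto [P]_{\partial M,\ev}$. Consequently $\lambda\circ c = \pd$ on the class of $P$, and since the classes $[P]_{\partial M}$ generate $\FEllV(M)$ the triangle commutes. The odd case is identical, with $\sigma^{nc}_{\partial M}$ landing in $K_{1}(C^{*}(\Tau M))$ and the clutching replaced by its odd analogue.

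For the index consequence, the main commutative diagram \eqref{Main} of the introduction, together with Theorem \ref{Thm:Fh-F}, says that the Fredholm index factors as $\ind = \ind^{\F}_{\partial M}\circ \pd$. Combining this with the triangle already proved yields
\[
  \ind(P) \;=\; \ind^{\F}_{\partial M}\bigl(\pd([P]_{\partial M})\bigr) \;=\; \ind^{\F}_{\partial M}\bigl(\lambda(c([P]_{\partial M}))\bigr) \;=\; \ind^{\F}_{\partial M}\bigl(\varphi_{!}^{nc}([B]_{\partial\SigmaA,\ev})\bigr).
\]
By Theorem \ref{Thm:pushforward-respect-indices}, the right-hand side is exactly the Fredholm index of $B$ viewed as a fully elliptic operator on the Lie manifold $(\SigmaA,\Gphi)$. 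Setting $\C=B$, which is a taming by construction (it is of the form $(D\oplus D')+R$ with $R$ smoothing, thanks to the Diracification Theorem \ref{Thm:26} used in Proposition \ref{Prop:taming-clutching}), gives $\ind(\C)=\ind(P)$.

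No part of the argument requires new input; the only delicate point is the bookkeeping of the identifications $\FEllV(M)\simeq K_{0}(C^{*}(\Tau M))$ arising from Theorem \ref{Thm:Fh-F} and from the definition of $\lambda$ via $\varphi_{!}^{nc}$, so that one is genuinely comparing the same class in $K_{0}(C^{*}(\Tau M))$ on both sides of $\lambda\circ c = \pd$. This is the step I would write out most carefully, but it follows from the compatibilities already embedded in Proposition \ref{Prop:taming-clutching} and in the definition of $c$.
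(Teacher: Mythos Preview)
Your proof is correct and follows essentially the same approach as the paper, which simply says ``The proof follows from Proposition \ref{Prop:taming-clutching}.'' You have merely unpacked why that proposition suffices: the defining identity $\varphi_{!}^{nc}([B]_{\partial\SigmaA,\ev})=[P]_{\partial M,\ev}$ from Proposition \ref{Prop:taming-clutching} is exactly the statement $\lambda\circ c=\pd$ on classes, and the index consequence is already recorded there (via Theorem \ref{Thm:pushforward-respect-indices}) as ``$B_{+}$ and $P$ have the same Fredholm index.''
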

The proof follows from Theorem \ref{Prop:taming-clutching}. The injectivity of the comparison map $\lambda$ will be studied elsewhere.

\appendix

\section{Transitivity of cobordisms}

\begin{Thm}\label{Lem:transitivity-cobordisms}
The cobordism relation is transitive up to isomorphisms of geometric cycles. 
\end{Thm}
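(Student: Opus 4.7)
The plan is to glue the two cobordisms along the common absolute face associated with $\Sigma_1$. Concretely, assume we are given cobordisms $w = (W, Y, \Phi, E, B)$ realizing $x_0 \sim x_1$ and $w' = (W', Y', \Phi', E', B')$ realizing $x_1' \sim x_2$, together with an isomorphism $\kappa : x_1 \iso x_1'$ of geometric cycles. I would assemble these data into a single cobordism $\widetilde{w}$ from $x_0$ to $x_2$, proceeding in three steps.

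First, I would glue the underlying manifolds with corners. The collar decomposition \eqref{eq:collar-neigh-bordism} furnishes tubular neighborhoods $\U \simeq \Sigma_1 \times [0, \epsilon)$ of $\Sigma_1 \subset W$ and $\U' \simeq \Sigma_1' \times [0, \epsilon')$ of $\Sigma_1' \subset W'$. Identifying these collars via $\kappa$ produces a compact manifold with corners $\widetilde{W} = W \cup_\kappa W'$ whose absolute faces are exactly $\Sigma_0$ and $\Sigma_2$ and whose relative faces are obtained by gluing $Y$ and $Y'$ along $\partial \Sigma_1$. Since $\varphi_1 = \varphi_1' \circ \kappa$ by Definition \ref{Def:iso-geocycles}, the maps $\Phi$ and $\Phi'$ fit into a surjective submersion $\widetilde{\Phi} : \widetilde{W} \to M$ satisfying the conditions of Definition \ref{Def:cobordism-over-mwc}. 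The blowup groupoid $\presuper{\widetilde{\Phi}_b}\G$ is then produced from $\presuper{\Phi_b}\G$ and $\presuper{\Phi'_b}\G$ using the compatibility of the local descriptions \eqref{eq:collar-neigh-bordism} on the overlap.

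Next I would glue the Dirac bundles. The isomorphism $\kappa$ identifies the Clifford data of $(E,D)$ and $(E',D')$ along the boundary reductions at $\Sigma_1$ up to the sign prescribed by the cobordism condition. Combined with the local product structure near absolute faces, this provides a smooth extension across the glued collar and yields a Dirac bundle $(\widetilde{E}, \widetilde{D})$ on the Lie manifold $(\widetilde{W}, \presuper{\widetilde{\Phi}_b}\G)$.

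The hard part will be constructing a $\widetilde{Y}$-taming $\widetilde{B}$ whose boundary reductions at $\Sigma_0$ and $\Sigma_2$ are $x_0$ and $-x_2$ respectively. By the cobordism conditions applied to $w$ and $w'$, the restrictions of $[B]$ and $[B']$ at $\Sigma_1$ in the sense of diagram \eqref{cd:cobord-2-boundary} are opposite after identification by $\kappa$; this cancellation is the key point. I would then apply a Mayer--Vietoris argument to the decomposition $\widetilde{W} = W \cup_\kappa W'$ at the level of the noncommutative tangent groupoid $\Tau_{\widetilde{Y}} \widetilde{W}$: exactness of the associated six-term sequence together with the cancellation produces a class in $K_\ast(C^\ast(\Tau_{\widetilde{Y}} \widetilde{W}))$ restricting at the two absolute faces to the prescribed boundary classes. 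Theorem \ref{Thm:26} then realizes this class as a genuine Dirac taming $\widetilde{B}$, completing the cobordism. The main obstacle is verifying that the boundary reductions of $\widetilde{B}$ agree with $x_0$ and $-x_2$ up to isomorphism of geometric cycles (and not merely homotopically); this requires using the flexibility in choosing the taming in Theorem \ref{Thm:26}, together with Definition \ref{Def:cd:cobord-2-boundary} which compares such reductions modulo isomorphism in any case. A more analytic alternative would be to glue $B$ and $B'$ directly by cutting off their smoothing perturbations away from the gluing collar and checking full ellipticity of the resulting perturbed Dirac operator from the matching of indicial operators at $\Sigma_1$.
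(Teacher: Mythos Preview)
Your plan has a genuine gap at the groupoid level, and the paper's proof is organized precisely around filling it.

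You assert that ``the blowup groupoid $\presuper{\widetilde{\Phi}_b}\G$ is then produced from $\presuper{\Phi_b}\G$ and $\presuper{\Phi'_b}\G$ using the compatibility of the local descriptions \eqref{eq:collar-neigh-bordism} on the overlap.'' This is not correct. In each piece, $\Sigma_1$ is an \emph{absolute} face, so $\presuper{\Phi_b}\G$ and $\presuper{\Phi'_b}\G$ each carry a blowup along it: by \eqref{eq:collar-bordism} their restrictions to $\Sigma_1$ are $\Rr\times\presuper{\varphi_1}\G$. Gluing the two along this common boundary decomposition yields a groupoid $\widetilde{\G}(\widetilde W)$ over $\widetilde W$ that still has this extra $\Rr$ factor sitting over the hypersurface $\Sigma_1$. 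But in the glued manifold $\widetilde W$, the hypersurface $\Sigma_1$ is \emph{interior}; the only absolute faces are $\Sigma_0$ and $\Sigma_2$, so $\presuper{\widetilde{\Phi}_b}\G$ has \emph{no} blowup along $\Sigma_1$. In fact, as the paper observes, the glued groupoid is the spherical blowup $\mathrm{Sblup}_{r,s}(\presuper{\widetilde{\Phi}_b}\G,\,(\presuper{\widetilde{\Phi}_b}\G)_{\Sigma_1}^{\Sigma_1})$ of the correct one. Your subsequent Mayer--Vietoris argument inherits this problem: the noncommutative tangent spaces $\Tau_Y W$ and $\Tau_{Y'}W'$ are \emph{not} the restrictions of $\Tau_{\widetilde Y}\widetilde W$ to the two halves, so the classes $[B]$ and $[B']$ do not live in the pieces of a decomposition of $K_*(C^*(\Tau_{\widetilde Y}\widetilde W))$.

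The paper's remedy is to introduce a deformation Lie groupoid $\H(\widetilde W)$ over $\widetilde W\times[0,1]_t$ interpolating between the glued groupoid $\widetilde{\G}(\widetilde W)$ at $t=0$ and the desired groupoid $\presuper{\widetilde{\Phi}_b}\G$ at $t=1$ (this is the blowup-to-identity deformation available once a collar around $\Sigma_1$ is fixed). The glued Dirac bundle and its $\widetilde Y$-tameability are then transported along this deformation via a commutative diagram of $K$-groups of Fredholm/adiabatic groupoids, after which Theorem~\ref{Thm:26} produces a taming on the correct groupoid. Your overall strategy (glue manifolds, glue Dirac bundles, then produce a taming) is the right shape, but it cannot be completed without this extra deformation step or an equivalent device relating the two groupoids.
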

\begin{proof}
Let $x_i=(\Sigma_i, \varphi_i, E_i, B_i)$, $i=1,2,3$, be  geometric cycles over $(M,\G)$ such that $x_1$ is cobordant to $x_2$  and $x_2$ cobordant to $x_3$.  
 Let $w_i=(W_i, Y_i, \Phi_i, F_i, C_i)$, $i=1,2$ be respective cobordisms. 
We set $\Phi=\Phi_1\cup\Phi_2 : W=W_1\underset{\Sigma_2}{\cup}W_2 \longrightarrow M$, $Y=Y_1\cup Y_2$. The triple $(W,\Phi,Y)$ is a manifold cobordism over $M$ between $\Sigma_1$ and $\Sigma_3$. We set : 
\begin{equation}
    \widetilde{\G}(W) = \G(W_1) \bigcup_{\G(\Sigma_2)\times \Rr} \G(W_2).
\end{equation}
Also, setting $F=F_1\underset{\Sigma_2}{\cup}F_2$ and using the point (3) in Definition \ref{Def:bordism-geocycle}, we get a Dirac bundle $(F,\widetilde{D})$ over $(W,  \widetilde{\G}(W))$ that restricts to the one of $w_{j}$ on $W_{j}$, $j=1,2$.  In particular, we have by assumption that $(F,\widetilde{D})$  is $Y$-tameable. 

Note that $\widetilde{\G}(W)= \SBlup_{r,s}(\G(W),\G(W)_{\Sigma_{2}}^{\Sigma_{2}})$ where 
\begin{equation}
  \quad\G(W)=\presuper{\presuper{b}\Phi}\G(M).
\end{equation}
Choosing a collar decomposition around $\Sigma_2$ into $W$, we get a deformation Lie groupoid:
\begin{equation*}
 \H(W) = \{t=0\}\times  \widetilde{\G}(W)\cup (0,1]_t\times \G(W)\rightrightarrows [0,1]\times  W
\end{equation*}
and it is clear that the Dirac bundle  $(F,\widetilde{D})$  can be lifted to a Dirac bundle $(F,C)$ over $(W\times [0,1],\H(W))$ and we thus end with a Dirac bundle $(F,D)$ over $(W,\G(W))$ by restriction at $t=1$. 

Since $\widetilde{D}$ is $Y$-tameable, we get using the following diagram (where we use this time Fredholm groupoids rather than their $K$-equivalent non commutative tangent space counterparts) that $C$ is $Y\times [0,1]$-tameable too and finally $D$ is $Y$-tameable. 

{\small \begin{equation}\label{cd:Y-taming}
\begin{tikzcd}
   K_0(C^*(\widetilde{\G}(W)^\F_Y)) \arrow{r}   & K_0(C^*(\widetilde{\G}(W)^{\mathrm{ad}}))     \\
 K_0(C^*(\H(W)^\F_{[0,1]\times Y})) \arrow{r}\arrow[d, "|_{t=1}"] \arrow[u, "|_{t=0}"']  & K_0(C^*(\H(W)^{\mathrm{ad}}))  \arrow[d, "|_{t=1}"]  \arrow[u, "|_{t=0}"', "\simeq"]   \\
   K_0(C^*(\G(W)^\F_Y)) \arrow{r}&  K_0(C^*(\G(W)^{\mathrm{ad}})) .
\end{tikzcd}
\end{equation}} 
Fixing a $Y$-taming $B$ for $D$ then provides the required cobordism $(W,Y,\Phi,F,B)$ between $x_{1}$ and $x_{3}$. 
\end{proof}

\section*{Acknowledgements}
For useful discussions we thank Bernd Ammann, Ulrich Bunke, Paulo Carrillo Rouse, Claire Debord, Magnus Goffeng, Victor Nistor, Elmar Schrohe, Georges Skandalis and Georg Tamme.
The first author was supported by the DFG-SPP 2026 `Geometry at Infinity'. The second author was supported by the Grant ANR-14-CE25-0012-01 SINGSTAR.
We are also happy to thank the referees for useful questions, comments and corrections.

\bigskip 
Karsten Bohlen, {\sc Universit\"at Regensburg, 93040 Regensburg, Germany} 

\emph{Email Address : } \textbf{karsten.bohlen@mathematik.uni-regensburg.de}

\bigskip Jean-Marie Lescure, {\sc Universit\'e Paris Est Creteil, Univ Gustave Eiffel, CNRS, LAMA UMR8050, F-94010 Creteil, France} 

\emph{Email Address : } \textbf{jean-marie.lescure@u-pec.fr}  

\end{document}